\theoremstyle{plain}
\newtheorem{theorem}{Theorem}
\newtheorem{lemma}{Lemma}
\newtheorem{corollary}{Corollary}
\newtheorem{property}{Property}
\newtheorem{prop}{Proposition}
\theoremstyle{definition}
\newtheorem{definition}{Definition}
\theoremstyle{remark}
\newtheorem{remark}{Remark}
\newtheorem{statment}{Statment}
\newtheorem*{main_theorem}{Main Theorem}
\begin{document}
\selectlanguage{english}

\title[] {Structure, minimal generating systems and properties of Sylow 2-subgroups of alternating group}

\author[]{Ruslan Skuratovskii}


\begin{abstract}

   The background of this paper is the following:  search  of the minimal systems of generators for this class of group which still was not founded also problem of representation for this class of group, exploration of systems of generators for Sylow 2-subgroups $Syl_2{A_{2^k}}$ and $Syl_2{A_{n}}$ of alternating group, finding  structure of these subgroups.

The authors of \cite{Dm} didn't proof minimality of finding by them system of generators for such Sylow 2-subgroups of $A_n$ and structure of it were founded only descriptively.
	The aim of this paper is to research the structure of Sylow 2-subgroups and to construct a minimal generating system for such subgroups. In other words, the problem is not simply in the proof of existence of a generating set with elements for Sylow 2-subgroup of alternating group of degree $2^k $ and its constructive proof and proof its minimality. For the construction of minimal generating set we used the representation of elements of group by automorphisms of portraits for binary tree. Also, the goal of this paper is to investigate the structure of 2-sylow subgroup of alternating group more exactly and deep than in \cite{Dm}.
 The main result is the proof of minimality of this generating set of the above described subgroups and also the description of their structure.
Key words:  minimal system of generators; wreath product; Sylow subgroups; group; semidirect product.

\end{abstract}

\maketitle

\section{Introduction }
The aim of this paper is to research the structure of Sylow 2-subgroups of $A_{2^k}$, $A_n$ and to construct a minimal generating system for it. Case of Sylow subgroup where $p=2$ is very special because group $C_2\wr C_2\wr C_2 \ldots \wr C_2 $ admits odd permutations, this case was not fully investigated in \cite{Dm,Sk}. This groups have applications in automaton theory, because if all states of automaton $A$ have output function that can be presented as cycle $(1,2,...,p)$ then group $G_A(X)$ of this automaton is Sylows p-subgroup of the group of all automaton transformations $GA(X)$.
       There was a mistake in a statement about irreducibility that system of $k+1$ elements for $Syl_2(A_{2^k})$ which was in abstract \cite{Iv} on Four ukraine conference
of young scientists in 2015 year.
All undeclared terms are from \cite{Ne, Gr}.
A minimal system of generators for a Sylow 2-subgroups of $A_n$ was found.
 We denote by $v_{j,i}$ the vertex of $X^j$, which has the number $i$.

\section{ Main result  }
 Let $X^{\omega}$ denote a rooted tree, i.e. a connected graph with no cycles
and a designated vertex $v$ called the root.
The $n$-th level, denoted by $X^n$ is defined by the distance $n$,
and $X^0 = {v}$.
The subtree of $X^{*}$ induced by the set of vertices $ \cup^n_{i=0} X^i$
 is called by restriction of rooted tree.
Let's denote by $X^{[k]}$ a labeled regular truncated binary rooted tree (with number of levels from $0$ to $k$ but active states are only from $X^l, 0 \leq l \leq k-1$) labeled by vertex, $v_{j,i} X^{[k-j]} $ -- subtree of $X^{[k]}$ with root in $v_{j,i}$.

Note that the unique vertex $v_{ki}$ corresponds to every word $v$ in alphabet $X$.
For every automorphism $g\in Aut{{X}^{*}}$ and every word $v \in X^{*}$  define the section $g(v) \in AutX^{*}$ of  $g$ at $v$ by the rule:  $g_{(v)}(x) = y$ for $x, y \in X^*$  if and only if $g(vx) = g(v)y$. A restriction $g(v)|X^{[1]} $ is called the state of $g$ at vertex $v$.
Denote by ${{s}_{ki}}(\alpha )$ the state of automorphism at vertex ${{v}_{ki}}$; that is: if ${{\alpha }_{{{v}_{ki}}}}=1$, then ${{s}_{ki}}(\alpha )=1$, otherwise ${{s}_{ki}}(\alpha )=0$.
An automorphism of $X^{[k]}$ with non-trivial state in some of
 $v_{1,1}$, $v_{1,{2}}$, $v_{2,{1}}$,..., $v_{2,4}$, ... ,$v_{m,1}$, ... ,$v_{m,j}$, $ m < k, j \leq 2^m$ is denoted by $\mathop{\beta}_{1,(i_{11},i_{12});...; l,(i_{l1},...,i_{l2^l});...;{m},(i_{m1},...,i_{m2^m})}$ 
where the indexes $l$ are the numbers of levels with non-trivial states, in
parentheses after this numbers we denote a cortege of vertices of this level,  $i_{mj}=0$ if state in $v_{mj}$ is trivial $i_{mj}=1$ in other case. If for some $l$ all $i_{lj}=0$ then cortege $l,(i_{l1} ,..., i_{l 2^l})$ does not figure in indexes of $\beta$. But if numbers of vertices with active states are certain, for example $v_{j,1}$ and $v_{j,s}$ we can use more easy notation $\mathop{\beta}_{j,(1,s);}$. If in parentheses only one index then parentheses can be omitted for instance $\mathop{\beta}_{j,(s);}=\mathop{\beta}_{j,s;}$.
Denote by $\tau_{i,...,j}$ the automorphism, which has a non-trivial vertex permutation (v.p.)
only in vertices $v_{k-1,i}$, ... ,$v_{k-1, j}$, $j  \leq 2^{k-1} $ of the
level $X^{k-1}$.
Denote by $\tau$ the automorphism $\tau_{1,2^{k-1}}$.
Let's consider special elements such that: $ \mathop{\alpha}_{0}=\mathop{\beta}_{0}=\mathop{\beta}_{0,(1,0,...,0)}, \mathop{\alpha}_{1}=\mathop{\beta}_{1}=\mathop{\beta}_{1,(1,0,...,0)}
, \ldots ,\mathop{\alpha}_{l}=\mathop{\beta}_{l}=\mathop{\beta}_{l,(1,0,...,0)} $.
Recall that wreath product of permutations groups is associative construction.

 \begin{lemma} \label{even} 

  Every state from $X^l$, $l<k-1$ determines an even permutation on $X^{k}$.

\end{lemma}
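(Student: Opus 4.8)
\medskip

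The plan is to make the action of a single nontrivial state on the bottom level $X^k$ completely explicit and then merely count the transpositions it produces. First I would identify the $2^k$ vertices of $X^k$ (the leaves of $X^{[k]}$) with the words of length $k$ over the two-letter alphabet $X$, in such a way that the vertex $v_{l,i}$ of level $l$ corresponds to a fixed word $u$ of length $l$ and the leaves lying below $v_{l,i}$ are exactly the words $uxw$ with $x\in X$ and $w\in X^{k-l-1}$.

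Next I would spell out the state itself. By the definition recalled above, the nontrivial state at $v_{l,i}$ is the automorphism $g$ of $X^{[k]}$ which fixes every vertex that does not lie below $v_{l,i}$ and which interchanges the two subtrees hanging at the children of $v_{l,i}$; concretely $g(uxw)=u\bar{x}w$ for $x\in X$ and $w\in X^{k-l-1}$ (with $\bar{x}$ the other letter of $X$), while $g$ fixes every leaf whose length-$l$ prefix differs from $u$. Hence, viewed as a permutation of the set $X^k$, the element $g$ is the product of the transpositions $(u0w,\,u1w)$ taken over all $w\in X^{k-l-1}$, and these transpositions are pairwise disjoint since the words $u0w$ and $u1w$ are all distinct as $w$ ranges over $X^{k-l-1}$.

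Consequently the sign of $g$ acting on $X^k$ equals $(-1)^{N}$ with $N=|X^{k-l-1}|=2^{k-l-1}$, the number of these disjoint transpositions. Since $l<k-1$ we have $k-l-1\ge 1$, so $N=2^{k-l-1}$ is even and the sign is $+1$; that is, $g$ induces an even permutation of $X^k$, which is precisely the assertion of the lemma. It is worth recording the borderline case $l=k-1$: there $N=2^{0}=1$, the state becomes a single transposition of two leaves and is therefore odd, which is exactly why the level $X^{k-1}$ must be handled separately and why the automorphisms $\tau_{i,\dots,j}$ supported on that level are singled out.

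I do not expect a real obstacle in this argument: the only point requiring care is the bookkeeping of the first two steps, namely pinning down exactly which leaves are moved and verifying that the induced map on $X^k$ genuinely decomposes into disjoint transpositions rather than longer cycles, after which the parity count is immediate. One should also be careful not to overstate the lemma: it is the \emph{state} at a single level-$l$ vertex that is even, not an arbitrary section at such a vertex, since a section may contain vertex permutations situated at level $k-1$, and those are odd.
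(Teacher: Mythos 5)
Your proposal is correct and follows essentially the same route as the paper: both arguments observe that a nontrivial state at a level-$l$ vertex swaps the two subtrees below it, hence acts on the leaves $X^k$ as a product of $2^{k-l-1}$ disjoint transpositions (the paper records this as the cycle structure with $2^{k-l-1}$ two-cycles and the remaining leaves fixed), and that this count is even precisely because $l<k-1$. Your version just makes the leaf-labelling and the word-level formula $g(uxw)=u\bar{x}w$ explicit, which the paper leaves implicit.
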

\begin{proof}
Actually every transposition from $X^l$, $l<k-1$ acts at even number of pair of vertexes because of binary tree structure, so it realize even permutation (of set which is vertexes of $X^{k}$) with cyclic structure \cite{Sh}  $(1^{2^{k-1}-2^{k-l-l}},2^{2^{k-l-l}})$ because it formed by the structure of binary tree.
 \end{proof}

\begin{corollary}\label{B_k-1}
Due to Lemma \ref{even} vertices permutations from $Aut X^{[k-1]}=\langle \mathop{\alpha}_{0}. ... , \mathop{\alpha}_{k-2} \rangle$ form a group: ${B_{k-1}}= \underbrace {C_2 \wr ...\wr C_2}_{k-1} $ which acts at $X^{k-1}$ by even permutations. Order of $B_{k-1}$ equal to $2^{2^{k-1}-1}$.
\end{corollary}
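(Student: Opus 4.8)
The plan is to check, in turn, that $B_{k-1}$ is a subgroup of the ambient permutation group, that all of its elements are even permutations, and that as an abstract group it is the $(k-1)$-fold iterated wreath power $\underbrace{C_{2}\wr\cdots\wr C_{2}}_{k-1}$ of order $2^{2^{k-1}-1}$.

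First I would observe that an automorphism of $X^{[k]}$ whose nontrivial vertex permutations all lie on the levels $X^{0},\dots,X^{k-2}$ is nothing but an automorphism of the truncated subtree $X^{[k-1]}$, extended identically on the last level. Such automorphisms are closed under composition and inversion and include the identity, so they form a subgroup $B_{k-1}=Aut X^{[k-1]}$ of $Aut X^{[k]}$, and through the faithful action on the leaf set $X^{k}$ a subgroup of $S_{2^{k}}$. The equality $B_{k-1}=\langle \alpha_{0},\dots,\alpha_{k-2}\rangle$ I would get from the usual argument: since $\langle \alpha_{0},\dots,\alpha_{l-1}\rangle=Aut X^{[l]}$ acts transitively on level $X^{l}$, conjugating $\alpha_{l}$ by a suitable product of $\alpha_{0},\dots,\alpha_{l-1}$ produces the elementary automorphism having a single nontrivial vertex permutation at any prescribed vertex of level $l$, and every automorphism of $X^{[k-1]}$ is a product of such elementary ones; this is an induction on $l$.

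Next, for the evenness, each generator $\alpha_{i}=\mathop{\beta}_{i,(1,0,\dots,0)}$ has a single nontrivial vertex permutation, located at $v_{i,1}$ with $i\le k-2$, hence $i<k-1$. Acting on $X^{k}$ it swaps the two subtrees hanging below $v_{i,1}$, i.e.\ it is a product of $2^{k-i-1}$ disjoint transpositions, and $2^{k-i-1}$ is even because $k-i-1\ge 1$; thus by Lemma \ref{even} every $\alpha_{i}$ is an even permutation of $X^{k}$. Since the even permutations of $X^{k}$ form a group, the whole of $B_{k-1}=\langle \alpha_{0},\dots,\alpha_{k-2}\rangle$ consists of even permutations, which is the assertion that $B_{k-1}$ acts by even permutations.

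Finally, for the isomorphism type and the order: removing the root of $X^{[m]}$ leaves two copies of $X^{[m-1]}$ interchanged by the root vertex permutation, so $Aut X^{[m]}\cong Aut X^{[m-1]}\wr C_{2}$; starting from $Aut X^{[1]}=C_{2}$ and using associativity of the wreath product (recalled just above the lemma) together with induction on $m$, this gives $B_{k-1}=Aut X^{[k-1]}\cong\underbrace{C_{2}\wr\cdots\wr C_{2}}_{k-1}$. Writing $b_{j}$ for the order of the $j$-fold wreath power, from $b_{1}=2$ and $b_{j+1}=2b_{j}^{2}$ one gets $b_{j}=2^{2^{j}-1}$ by induction, so $|B_{k-1}|=2^{2^{k-1}-1}$. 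The only place that needs care is the level bookkeeping -- making sure the $k-1$ branching levels $X^{0},\dots,X^{k-2}$ really correspond to $k-1$ wreath factors and to the exponent $2^{k-1}-1$, and that every generator $\alpha_{i}$ satisfies the strict inequality $i<k-1$ required to invoke Lemma \ref{even}; the remaining steps are routine.
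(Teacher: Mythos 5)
Your proof is correct and follows essentially the same route the paper takes implicitly: evenness of the generators via Lemma \ref{even} (you rightly read the parity claim as referring to the action on the leaf level $X^{k}$, which is the only reading consistent with that lemma, since $\alpha_{k-2}$ acts as a single transposition, hence oddly, on $X^{k-1}$), combined with the standard recursive wreath decomposition of $Aut X^{[k-1]}$ giving the isomorphism type $\underbrace{C_2\wr\cdots\wr C_2}_{k-1}$ and the order $2^{2^{k-1}-1}$. The paper states this corollary without a separate proof, so your write-up simply supplies the details (generation by the $\alpha_i$, closure of even permutations, and the order recursion) that the paper leaves implicit.
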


Let us denote by $G_k$ the subgroup of $Aut X^{[k]}$ such that $G_k  \simeq  Syl_2  A_{2^k}$
and $W_{k-1}$ the subgroup of $G_k$ that has active states only on $ X^{k-1}$.

\begin{prop} \label{ordW}

An order of ${{W}_{k-1}}$ is equal to ${{2}^{{{2}^{k-1}}-1}},\,\,k > 1$ its structure is $W_{k-1} \simeq (C_2)^{{{2}^{k-1}}-1}$.
\end{prop}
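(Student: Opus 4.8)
The plan is to identify $W_{k-1}$ with an explicit index-$2$ subgroup of an elementary abelian $2$-group, namely the even-weight subgroup of the base group of the bottom level of the iterated wreath product $Aut\, X^{[k]}$, and then to read off the order and the isomorphism type at once. Throughout I take $G_k$ to be the natural copy $Aut\, X^{[k]}\cap A_{2^k}$ of $Syl_2\, A_{2^k}$, i.e. the even permutations among the automorphisms of the $2^k$ leaves of $X^{[k]}$.

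First I would pin down the ambient group. An automorphism of $X^{[k]}$ whose active states all lie on the level $X^{k-1}$ is determined precisely by the tuple $(s_{k-1,1},\dots,s_{k-1,2^{k-1}})\in\{0,1\}^{2^{k-1}}$ of its states at the $2^{k-1}$ vertices of $X^{k-1}$. Vertex permutations at distinct vertices of one level move pairwise disjoint sets of leaves, so they commute and are mutually independent; hence all such automorphisms form an elementary abelian subgroup $N\simeq (C_2)^{2^{k-1}}$ of $Aut\, X^{[k]}=\underbrace{C_2\wr\cdots\wr C_2}_{k}$, of order $2^{2^{k-1}}$, and by definition $W_{k-1}=N\cap G_k$.

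Next I would determine which elements of $N$ act on $X^{k}$ by even permutations. A non-trivial state at $v_{k-1,i}$ transposes the two leaves of $X^{k}$ lying below $v_{k-1,i}$, and for distinct $i$ these transpositions are disjoint; thus the element of $N$ with state-tuple $(s_{k-1,i})_i$ induces on $X^{k}$ a product of exactly $\sum_i s_{k-1,i}$ disjoint transpositions, which is an even permutation if and only if this tuple has even weight. Since the elements of $G_k$ act on $X^{k}$ by even permutations, it follows that $W_{k-1}=N\cap G_k$ equals the kernel of the parity homomorphism $\mathrm{sgn}\colon N\to C_2$ that sends a tuple to the parity of its weight.

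Finally, $\mathrm{sgn}$ is onto, since a single non-trivial state on $X^{k-1}$ induces an odd permutation of $X^{k}$; hence $W_{k-1}=\ker(\mathrm{sgn})$ has index $2$ in $N$, so $|W_{k-1}|=\tfrac{1}{2}\cdot 2^{2^{k-1}}=2^{2^{k-1}-1}$ for $k>1$, and, being a subgroup of the elementary abelian group $N\simeq (C_2)^{2^{k-1}}$, it is itself elementary abelian, whence $W_{k-1}\simeq (C_2)^{2^{k-1}-1}$. The point needing the most care is the equality $N\cap G_k=\ker(\mathrm{sgn})$, not merely the inclusion $N\cap G_k\subseteq\ker(\mathrm{sgn})$: with $G_k=Aut\, X^{[k]}\cap A_{2^k}$ it is immediate, but even if $G_k$ were given only abstractly one notes that $G_k$ has index $2$ in $Aut\, X^{[k]}$, so $N\cap G_k$ can fall below $N$ by a factor of at most $2$, a factor the sign map already realizes. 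As a cross-check, the same value follows from $|Syl_2\, A_{2^k}|=2^{2^k-2}$ and $|B_{k-1}|=2^{2^{k-1}-1}$ of Corollary \ref{B_k-1}: by Lemma \ref{even} the action of $B_{k-1}$ on $X^{k-1}$ is automatically even, so $B_{k-1}$ contributes its full order on top of $W_{k-1}$ inside $G_k$, forcing $|W_{k-1}|=2^{2^k-2}/2^{2^{k-1}-1}=2^{2^{k-1}-1}$.
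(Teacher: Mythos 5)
Your proof is correct and follows essentially the same route as the paper: you realize $W_{k-1}$ as the even-weight (index-$2$) subgroup of the elementary abelian group $(C_2)^{2^{k-1}}$ of all state configurations on $X^{k-1}$, exactly the paper's ``half of $V_{k-1}$'' argument, just made precise via the parity homomorphism. Your version is in fact cleaner than the paper's (which garbles some exponents, e.g.\ writing $|W_{k-1}|=2^{k-1}-1$ and ``$k-1$ generators'' where $2^{2^{k-1}-1}$ and $2^{k-1}-1$ are meant), and the only caveat is that your order cross-check via $|Syl_2 A_{2^k}|$ should not be treated as part of the proof, since the paper derives that order later using this very proposition.
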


  \begin{proof}
On ${{X}^{k-1}}$ we have ${{2}^{k-1}}$ vertexes where can be group ${{V}_{k-1}}\simeq {{C}_{2}}\times {{C}_{2}}\times ...\times {{C}_{2}}\simeq {{({{C}_{2}})}^{k-1}}$, but as a result of the fact that ${X}^{k-1}$ contains only even number of active states or active vertex permutations (v.p.) in vertexes of $X^{k-1}$, there are only half of all permutations from ${{V}_{k-1}}$ on $X^{k-1}$. So it's subgroup of ${{V}_{k-1}}$: ${{W}_{k-1}}\simeq {}^{C_{2}^{{{2}^{k-1}}}}/{}_{{{C}_{2}}}$. So we can state: $|{{W}_{k-1}}|={2^{k-1}-1}$, $W_{k-1}$ has $k-1$ generators and we can consider ${W}_{k-1}$ as vector space of dimension $k-1$.
 \end{proof}

For example let's consider subgroup $W_{4-1}$ of $A_{2^4}$ its order is $2^{2^{4-1}-1}=2^7$ and $|A_{2^4}|=2^{14}$.

 \begin{lemma}\label{gen} 
 The elements $\tau $ and ${{\alpha }_{0}},...,{{\alpha }_{k-1}}$ generates arbitrary element ${{\tau }_{ij}}$.
\end{lemma}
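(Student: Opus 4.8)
The plan is to generate every $\tau_{ij}$ out of the single element $\tau=\tau_{1,2^{k-1}}$ by interleaving two moves: \emph{conjugation} by elements of $B_{k-1}=\langle\alpha_0,\dots,\alpha_{k-2}\rangle$, which slides the pair of active vertices of $\tau$ along the level $X^{k-1}$, and \emph{multiplication} of the elements so obtained, which makes a vertex permutation common to two factors cancel. Note that each $\tau_{ij}$ has exactly two active v.p.\ and so is an even permutation by Lemma~\ref{even}; the odd element $\alpha_{k-1}$ will play no part in the argument.

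First I would describe the effect of conjugation. Since $\tau$ acts trivially on the vertices of $X^0,\dots,X^{k-1}$ and has a nontrivial v.p.\ exactly at $v_{k-1,1}$ and $v_{k-1,2^{k-1}}$, while by Corollary~\ref{B_k-1} any $g\in B_{k-1}$ only permutes the $2^{k-1}$ vertices of $X^{k-1}$ among themselves, a direct check at the level of vertex permutations gives $g\tau g^{-1}=\tau_{i',j'}$ with $v_{k-1,i'}=g(v_{k-1,1})$ and $v_{k-1,j'}=g(v_{k-1,2^{k-1}})$. Hence conjugating $\tau$ by $B_{k-1}$ produces precisely the $\tau_{ij}$ whose active pair lies in the $B_{k-1}$-orbit of $\{v_{k-1,1},v_{k-1,2^{k-1}}\}$. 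Now $v_{k-1,1}$ lies in the left maximal subtree $v_{1,1}X^{[k-1]}$ and $v_{k-1,2^{k-1}}$ in the right one $v_{1,2}X^{[k-1]}$; write $L$ and $R$ for the two halves of $X^{k-1}$ they determine. As $B_{k-1}=\underbrace{C_2\wr\dots\wr C_2}_{k-1}$ contains the direct product of the automorphism groups of these two subtrees — each transitive on the $2^{k-2}$ vertices of its half — and also contains $\alpha_0$, which interchanges the halves, the orbit is exactly the set of all ``cross'' pairs $\{x,y\}$ with $x\in L$, $y\in R$. Therefore $\tau_{ij}\in\langle\tau,\alpha_0,\dots,\alpha_{k-2}\rangle$ whenever one of $v_{k-1,i},v_{k-1,j}$ is in $L$ and the other in $R$.

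Second comes the multiplicative step. The automorphisms supported only on $X^{k-1}$ form an elementary abelian $2$-group of commuting involutive vertex permutations (Proposition~\ref{ordW}), so for distinct $i,j,l$ one has $\tau_{ij}\tau_{jl}=\tau_{il}$: the v.p.\ at the shared vertex $v_{k-1,j}$ occurs twice and disappears. View the $2^{k-1}$ vertices of $X^{k-1}$ as the nodes of a graph whose edges are the cross pairs above; this is the complete bipartite graph on $L\cup R$, hence connected and spanning. Since $\tau_{ij}$ is available for every edge $\{i,j\}$, and $\tau_{ij}\tau_{jl}=\tau_{il}$ composes edges along paths, an easy induction on distance in this graph gives $\tau_{pq}\in\langle\tau,\alpha_0,\dots,\alpha_{k-2}\rangle$ for all vertices $v_{k-1,p},v_{k-1,q}$ of $X^{k-1}$. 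In particular every $\tau_{ij}$ is generated, which is the claim.

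The point needing care — and the reason the statement is not obvious — is that $B_{k-1}$ is only a Sylow $2$-subgroup of the full symmetric group on the $2^{k-1}$ vertices of $X^{k-1}$, not that symmetric group itself: conjugating $\tau$ cannot move its active pair onto two vertices of the same half (such a pair is at smaller distance in $X^{[k-1]}$), so conjugation alone reaches only the cross pairs. The argument thus rests on two sub-steps I would establish carefully — the transitivity of $B_{k-1}$ on cross pairs, and the fact that the cross pairs already form a connected spanning graph — after which the relation $\tau_{ij}\tau_{jl}=\tau_{il}$ bridges the remaining gap.
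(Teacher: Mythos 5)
Your proposal is correct and uses essentially the same two ingredients as the paper's own proof: conjugation of $\tau$ by elements of $\langle\alpha_0,\dots,\alpha_{k-2}\rangle$ to relocate its two active vertices at $X^{k-1}$, and multiplication of such conjugates so that a shared active vertex cancels ($\tau_{ij}\tau_{jl}=\tau_{il}$). The only difference is organizational — you obtain all cross pairs by conjugation and then same-half pairs via connectivity of the bipartite pair graph, whereas the paper first builds pairs pinned at $v_{k-1,2^{k-1}}$, multiplies to get left-half pairs, transfers them by $\alpha_0$, and gets cross pairs from the product $\tau_{1,i}\,\tau\,\tau_{m,2^{k-1}}$ — which is the same argument in a slightly different order.
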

 \begin{proof} According to [5, 6] 
  the set  ${{\alpha }_{0}},...,{{\alpha }_{k-2}}$ is minimal system of generators of group $Aut{{X}^{[k-1]}}$. Firstly, we shall proof the possibility of generating an arbitrary $\tau_{ij}$, from set $v_{(k-1,i)}$, $1\leq i \leq 2^{k-2} $. 
Since $Aut v_{1,1}X^{[k-2]} \simeq \left\langle {{\alpha }_{1}},...,{{\alpha }_{k-2}} \right\rangle $ acts at ${{X}^{k-1}}$ transitively from it follows existing of an ability to permute vertex with a transposition from automorphism $\tau $ and stands in $v_{k-1,1}$ in arbitrary vertex ${{v}_{k-1,j}},\,\,\,j\le {{2}^{k-2}}$ of $v_{1,1}X^{[k-1]}$, for this goal we act by $\alpha_{k-j}$ at $\tau $: $\alpha_{k-j} \tau \alpha_{k-j} ={{\tau }_{j, 2^{k-2}}}$. Similarly we act at $\tau$ by corespondent $\alpha_{k-i}$ to get $\tau_{i, 2^{k-2}}$ from $\tau $: $\alpha_{k-i}  \tau \alpha_{k-i}^{-1}={{\tau }_{i, 2^{k-2}}}$. Note that automorphisms $\alpha_{k-j}$ and $\alpha_{k-i}, 1<i,j<k-1$ acts only at subtree $v_{1,1}X^{[k-1]}$ that's why they fix v.p. in $v_{k-1, 2^{k-1}}$. Now we see that ${\tau }_{i, 2^{k-2}}{{\tau }_{j, 2^{k-2}}}={{\tau }_{i, j}}$, where $1 \leq i,j < 2^{k-2}$.
To get ${\tau }_{m, l}$ from $v_{1,2}X^{[k-1]}$, i.e. $2^{k-2} < m,l \leq 2^{k-1} $ we use $\alpha_0$ to map ${\tau }_{i, j}$ in ${\tau }_{i+2^{k-2}, j+2^{k-2}}\in v_{1,2} AutX^{[k-1]}$. To construct arbitrary transposition ${\tau }_{i,m}$ from $W_k$ we have to multiply ${\tau }_{1,i} {\tau } {\tau }_{m,2^{k-1}}={\tau }_{i,m}$.
Lets realize natural number of $v_{k,l}$, $1<l<2^k$ in 2-adic system of presentation (binary arithmetic).  Then $l={\delta_{{{l}_{1}}}}{{2}^{m_l}}+{\delta_{{{l}_{2}}}}{{2}^{m_l-1}}+...+{\delta_{{{l}_{m_l+1}}}},\,\, \delta_{l_i} \in \{0,1\}$ where is a correspondence between  ${\delta_{{{l}_{i}}}}$ that from such presentation and expressing of automorphisms: $\tau_{l,2^{k-1}} = \prod_{i=1}^{m_l} \alpha_{k-2-(m_{l}-i)}^{\delta_{{l}_{i}}} \tau  \prod_{i=1}^{m_l} \alpha_{k-2-(m_{l}-i)}^{\delta_{l_i}},  1 \leq m_l \leq k-2$.  In other words $\left\langle {{\alpha }_{0}},...,{{\alpha }_{k-2}},\tau  \right\rangle \simeq {{G}_{k}}$.
 \end{proof}

\begin{corollary}\label{genG_k-1} The elements from conditions of Lemma \ref{gen} is enough to generate basis of $W_{k-1}$.
\end{corollary}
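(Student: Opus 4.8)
The plan is to exhibit inside $W_{k-1}$ an explicit family of $2^{k-1}-1$ elements, each of them obtainable from $\tau$ and $\alpha_{0},\dots,\alpha_{k-2}$ by the manipulations of Lemma~\ref{gen}, and then to check that this family is linearly independent once $W_{k-1}$ is regarded as a vector space over $\mathbb{F}_{2}$. By Proposition~\ref{ordW} the dimension of $W_{k-1}$ is exactly $2^{k-1}-1$, so a linearly independent set of that cardinality is automatically a basis.

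First I would recall, from the proof of Proposition~\ref{ordW}, the identification of $W_{k-1}$ with the subspace of even Hamming weight in $\mathbb{F}_{2}^{2^{k-1}}$, the coordinates being indexed by the vertices $v_{k-1,1},\dots,v_{k-1,2^{k-1}}$ of the level $X^{k-1}$: to an automorphism one associates the $0/1$ vector recording in which of these vertices it has a non-trivial vertex permutation, and membership in $W_{k-1}$ (that is, evenness of the induced permutation on $X^{k}$, cf.\ Lemma~\ref{even}) corresponds precisely to evenness of the weight of that vector.

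Next, for each $i$ with $1\le i\le 2^{k-1}-1$ consider the automorphism $\tau_{i,2^{k-1}}$, which has a non-trivial vertex permutation exactly in $v_{k-1,i}$ and in $v_{k-1,2^{k-1}}$. It is a product of two transpositions, hence an even permutation, so $\tau_{i,2^{k-1}}\in W_{k-1}$, and by Lemma~\ref{gen} it is generated by $\tau$ together with $\alpha_{0},\dots,\alpha_{k-2}$. Under the identification above, $\tau_{i,2^{k-1}}$ corresponds to the vector $e_{i}+e_{2^{k-1}}$, and the family $\{\,e_{i}+e_{2^{k-1}}:1\le i\le 2^{k-1}-1\,\}$ is the standard generating set of the even-weight subspace: any relation $\sum_{i\in S}(e_{i}+e_{2^{k-1}})=0$ with $S\neq\varnothing$ and $S\subseteq\{1,\dots,2^{k-1}-1\}$ would have a non-zero $e_{i}$-coordinate for $i\in S$, a contradiction. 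Hence these $2^{k-1}-1$ elements are linearly independent in $W_{k-1}$.

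Finally, since $\dim_{\mathbb{F}_{2}} W_{k-1}=2^{k-1}-1$ by Proposition~\ref{ordW}, a linearly independent family of that cardinality must span, so $\{\,\tau_{i,2^{k-1}}:1\le i\le 2^{k-1}-1\,\}$ is a basis of $W_{k-1}$, built entirely from the elements of Lemma~\ref{gen}. The only delicate point is the first step — making sure that $W_{k-1}$ is the \emph{full} even-weight subspace rather than some proper subgroup of it — but that is exactly what Proposition~\ref{ordW} supplies, so no new obstacle arises; the rest is the elementary linear algebra of the even-weight code.
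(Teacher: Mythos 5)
Your proof is correct and follows essentially the route the paper intends: the paper states this corollary without an explicit proof, treating it as immediate from Lemma~\ref{gen} (all $\tau_{ij}$ are expressible from $\tau,\alpha_0,\dots,\alpha_{k-2}$) together with the identification of $W_{k-1}$ as the even-weight subspace of dimension $2^{k-1}-1$ from Proposition~\ref{ordW}, which is exactly the argument you spell out with the explicit basis $\{\tau_{i,2^{k-1}}\}$. The only cosmetic difference is that where the paper later exhibits a basis it uses the ``adjacent'' family $\tau_{1,2},\tau_{2,3},\dots,\tau_{2^{k-1}-1,2^{k-1}}$ rather than your $e_i+e_{2^{k-1}}$-type family; both are bases of the even-weight code, so nothing of substance changes.
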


 \begin{lemma}\label{ordG_k} Orders of groups $G_k = \langle   \mathop{\alpha}_{0}, \mathop{\alpha}_{1},
 \mathop{\alpha}_{2},...,\mathop{\alpha}_{k-2}, \mathop{\tau} \rangle $
and $Syl_2(A_{2^{k}})$ are equal to $2^{2^{k}-2}$.
\end{lemma}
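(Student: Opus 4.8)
The plan is to compute $|G_k|$ by splitting the group into the part acting on the top $k-1$ levels and the part acting on the bottom level $X^{k-1}$, and to show the resulting product equals $2^{2^k-2}$; then separately to recall why $|Syl_2(A_{2^k})|$ has the same value, so that the two numbers agree. First I would observe that restriction of an automorphism in $\mathrm{Aut}X^{[k]}$ to the top $k-1$ levels gives a surjection $G_k \to B_{k-1}$ (using Lemma \ref{gen} and Corollary \ref{genG_k-1}, the generators $\alpha_0,\dots,\alpha_{k-2},\tau$ map onto a generating set of $B_{k-1}=\underbrace{C_2\wr\cdots\wr C_2}_{k-1}$, since $\tau$ acts trivially on levels $0,\dots,k-2$). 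The kernel of this restriction is exactly $W_{k-1}$, the subgroup of $G_k$ with active states only on $X^{k-1}$. So $|G_k|=|B_{k-1}|\cdot|W_{k-1}|$.

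Next I would plug in the orders already established: by Corollary \ref{B_k-1}, $|B_{k-1}|=2^{2^{k-1}-1}$, and by Proposition \ref{ordW}, $|W_{k-1}|=2^{2^{k-1}-1}$. Hence
\[
|G_k| \;=\; 2^{2^{k-1}-1}\cdot 2^{2^{k-1}-1} \;=\; 2^{\,2\cdot 2^{k-1}-2} \;=\; 2^{\,2^{k}-2}.
\]
Then I would recall the classical count of the Sylow $2$-subgroup order of the symmetric and alternating groups: $|Syl_2(S_{2^k})|$ equals the $2$-part of $(2^k)!$, which by Legendre's formula is $2^{2^k-1}$ (the full iterated wreath product $\underbrace{C_2\wr\cdots\wr C_2}_{k}$ has order $2^{2^k-1}$). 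Since $A_{2^k}$ has index $2$ in $S_{2^k}$, its Sylow $2$-subgroup has order exactly half of that, namely $2^{2^k-2}$. Comparing, $|G_k|=2^{2^k-2}=|Syl_2(A_{2^k})|$, which is what we wanted; combined with the already-noted embedding $G_k\simeq Syl_2 A_{2^k}$ this confirms consistency.

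The main obstacle I anticipate is justifying cleanly that the restriction map $G_k\to B_{k-1}$ is surjective with kernel precisely $W_{k-1}$, i.e. that every even permutation of the top levels is actually realized by some element of $G_k$ whose bottom-level behaviour can be chosen to land inside $W_{k-1}$. This is essentially the content of Lemma \ref{gen} and Corollary \ref{genG_k-1} together with Lemma \ref{even} (which guarantees that the states coming from $X^l$, $l<k-1$, are even and hence do not force us outside $A_{2^k}$), so the work is in assembling those pieces rather than in any new computation. A secondary care point is the edge case $k=1$: there $G_1\simeq C_2$ has order $2=2^{2^1-2}\cdot 2 = 2^0\cdot 2$; one should note that $2^{2^k-2}=2^0=1$ for $k=1$ actually fails, so the statement is implicitly for $k\ge 2$ (consistent with Proposition \ref{ordW}'s hypothesis $k>1$), and I would state that restriction explicitly.
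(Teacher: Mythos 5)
Your proposal is correct and follows essentially the same route as the paper: compute $\lvert Syl_2(A_{2^k})\rvert=2^{2^k-2}$ via Legendre's formula (halving the $2$-part of $(2^k)!$), and compute $\lvert G_k\rvert=\lvert B_{k-1}\rvert\cdot\lvert W_{k-1}\rvert=2^{2^{k-1}-1}\cdot 2^{2^{k-1}-1}$ using Corollary \ref{B_k-1} and Proposition \ref{ordW}, the only cosmetic difference being that you justify the product formula by the restriction homomorphism onto $B_{k-1}$ with kernel $W_{k-1}$ while the paper invokes the decomposition $G_k=B_{k-1}\ltimes W_{k-1}$ directly. Your remark that the count implicitly assumes $k\ge 2$ is a reasonable added precision, consistent with the hypothesis $k>1$ in Proposition \ref{ordW}.
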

 \begin{proof}
In accordance with Legender's formula, the power of 2 in ${{2}^{k}}!$ is $\left[ \frac{{{2}^{k}}}{2} \right]+\left[ \frac{{{2}^{k}}}{{{2}^{2}}} \right]+\left[ \frac{{{2}^{k}}}{{{2}^{3}}} \right]+...+\left[ \frac{{{2}^{k}}}{{{2}^{k}}} \right]=\frac{{{2}^{k}}-1}{2-1}$. We need to subtract 1 from it because we have only $\frac{n!} {2}$ of all permutations as a result: $\frac{{{2}^{k}}-1}{2-1}-1=2^{k}-2$. So $\left| Syl({{A}_{{{2}^{k}}}}) \right|={{2}^{{{2}^{k}}-2}}$.
The same order has group $G_k=B_{k-1} \ltimes W_{k-1}$: $|G_k|=|B_{k-1}|\cdot|W_{k-1}|= |Syl_2 A_{2^k}|$, since order of groups $G_{k}$ according to Proposition \ref{ordW} and the fact that $|B_{k-1}|=2^{2^{k-1}-1}$ is $2^{2^{k}-2}$.
For instance the orders of $Syl_2 (A_8)$, $B_{3-1}$ and $W_{3-1}$: $|W_{3-1}|= 2^{2^{3-1}-1}=2^3=8$, $|B_{3-1}|=|C_2\wr C_2| = 2 \cdot 2^2=2^3$ and according to Legendre's formula, the power of 2 in ${{2}^{k}}!$ is $\frac{{2}^{3}}{2}+ \frac{{2}^{3}}{2^2}+\frac{{2}^{3}}{2^3} -1=6$ so $Syl_2 (A_8) = 2^6=2^{2^k-2}$, where $k=3$. Next example for $A_{16}$: $Syl_2 (A_{16}) =2^{2^4-2}= 2^{14}, k=4$, $|W_{4-1}|= 2^{2^{4-1}-1}=2^7$, $|B_{4-1}|=|C_2\wr C_2\wr C_2| = 2 \cdot 2^2\cdot 2^4 = 2^7$. So we have equality $2^7 2^7 = 2^{14}$ which endorse the condition of this Lemma.
 \end{proof}

\begin{theorem}
\textbf{A maximal 2-subgroups} $G_k$ of $Aut{{X}^{\left[ k \right]}}$ that acts by even permutations on ${{X}^{k}}$ have structure of semidirect product $G_k \simeq  B_{k-1} \ltimes W_{k-1} $ and isomorphic to $Syl_2A_{2^k}$.
\end{theorem}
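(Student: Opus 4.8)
The plan is to first recognize $G_k$ as a Sylow $2$-subgroup of $A_{2^k}$ (equivalently, a maximal $2$-subgroup of $Aut\,X^{[k]}$ acting by even permutations on $X^k$), and then to produce the internal semidirect decomposition $G_k=B_{k-1}\ltimes W_{k-1}$ directly from the level structure of the tree.

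First I would check that $G_k=\langle\alpha_0,\dots,\alpha_{k-2},\tau\rangle$ is a $2$-group lying inside $A_{2^k}$. It is a subgroup of $Aut\,X^{[k]}\simeq\underbrace{C_2\wr\cdots\wr C_2}_{k}$, which is a $2$-group; and each generator induces an even permutation on $X^k$: the $\alpha_i$ with $i\leq k-2$ by Lemma~\ref{even}, and $\tau=\tau_{1,2^{k-1}}$ because its two vertex permutations at $v_{k-1,1}$ and $v_{k-1,2^{k-1}}$ contribute a product of two transpositions of leaves of $X^k$. Hence $G_k\leq A_{2^k}$. By Lemma~\ref{ordG_k}, $|G_k|=2^{2^k-2}$, and by Legendre's formula this is exactly the $2$-part of $|A_{2^k}|=\tfrac12(2^k)!$; so $G_k$ is a Sylow $2$-subgroup (hence a maximal $2$-subgroup of $Aut\,X^{[k]}$ contained in $A_{2^k}$, since that action is faithful), and as Sylow subgroups are conjugate, $G_k\simeq Syl_2\,A_{2^k}$.

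For the semidirect structure I would use associativity of the wreath product to write $Aut\,X^{[k]}\simeq (C_2)^{2^{k-1}}\rtimes Aut\,X^{[k-1]}$, where $(C_2)^{2^{k-1}}$ is the base group of the last wreath factor (all vertex permutations concentrated on $X^{k-1}$) and $Aut\,X^{[k-1]}$ acts through its permutation action on $X^{k-1}$. Let $\pi$ be the corresponding projection, i.e.\ restriction of an automorphism of $X^{[k]}$ to its top $k-1$ levels; then $\ker\pi=(C_2)^{2^{k-1}}$, so $\ker(\pi|_{G_k})=G_k\cap(C_2)^{2^{k-1}}$ is precisely the subgroup of $G_k$ with active states only on $X^{k-1}$, namely $W_{k-1}$. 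Thus $W_{k-1}\trianglelefteq G_k$, with order and elementary abelian structure supplied by Proposition~\ref{ordW}. Next, since $\alpha_0,\dots,\alpha_{k-2}$ are among the generators of $G_k$, we have $B_{k-1}=\langle\alpha_0,\dots,\alpha_{k-2}\rangle\leq G_k$, and these generators have active states only on levels $0,\dots,k-2$, so $B_{k-1}\cap(C_2)^{2^{k-1}}=\{1\}$ and hence $B_{k-1}\cap W_{k-1}=\{1\}$; meanwhile $\tau\in\ker\pi$, so $\pi(G_k)=\langle\pi(\alpha_0),\dots,\pi(\alpha_{k-2})\rangle$, which by the generation result for $Aut\,X^{[k-1]}$ cited in the proof of Lemma~\ref{gen} is all of $Aut\,X^{[k-1]}\simeq B_{k-1}$. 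Therefore $\pi|_{B_{k-1}}\colon B_{k-1}\to\pi(G_k)$ is an isomorphism splitting the exact sequence $1\to W_{k-1}\to G_k\xrightarrow{\ \pi\ }\pi(G_k)\to 1$, and we conclude $G_k=B_{k-1}\ltimes W_{k-1}$. As a consistency check, $|B_{k-1}|\cdot|W_{k-1}|=2^{2^{k-1}-1}\cdot 2^{2^{k-1}-1}=2^{2^k-2}=|G_k|$, so $B_{k-1}W_{k-1}$ exhausts $G_k$.

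The part I expect to be the main obstacle is verifying with full rigor that $\pi$ maps $G_k$ \emph{onto} the entire $(k-1)$-fold iterated wreath product $B_{k-1}$, and that $\ker(\pi|_{G_k})$ is \emph{exactly} $W_{k-1}$ and nothing larger: this is where the precise level-by-level decomposition of $Aut\,X^{[k]}$, the associativity of $\wr$, and the minimal-generation input for $Aut\,X^{[k-1]}$ from Lemma~\ref{gen} must be combined carefully. Once that is secured, the order computation of Lemma~\ref{ordG_k} merely confirms that the decomposition is exhaustive and that $G_k$ is genuinely of Sylow order.
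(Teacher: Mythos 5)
Your proof is correct and takes essentially the same route as the paper: the same level-wise decomposition with $W_{k-1}$ as the normal part supported on $X^{k-1}$ and $B_{k-1}=\langle\alpha_0,\dots,\alpha_{k-2}\rangle$ as a complement, combined with the order count $|B_{k-1}|\cdot|W_{k-1}|=2^{2^{k}-2}$ and the Sylow/maximality conclusion via Legendre's formula. The only difference is presentational: you organize the decomposition through the restriction homomorphism $\pi$ and a split exact sequence, which makes the normality of $W_{k-1}$, the trivial intersection $B_{k-1}\cap W_{k-1}=\{1\}$ and the exhaustion $B_{k-1}W_{k-1}=G_k$ explicit, whereas the paper obtains normality by observing that conjugation preserves the level-$(k-1)$ support and parity of active states and then concludes from the same order count.
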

\begin{proof}
A maximal 2-subgroup of  $Aut{{X}^{\left[ k-1 \right]}}$ is isomorphic to ${{B}_{k-1}}\simeq \underbrace{{{S}_{2}}\wr {{S}_{2}}\wr ...\wr {{S}_{2}}}_{k-1}$ (this group acts on ${{X}^{k-1}}$). A maximal 2-subgroup which has elements with active states only on ${{X}^{k-1}}$ corresponds subgroup ${W}_{k-1}$.
Since subgroups ${B}_{k-1}$ and ${{W}_{k-1}}$ are embedded in $Aut{{X}^{\left[ k \right]}}$, then define an action of ${{B}_{k-1}}$ on elements of ${{W}_{k-1}}$ as ${{\tau }^{\sigma }}=\sigma \tau {{\sigma }^{-1}},\,\,\,\sigma \in {{B}_{k-1}},\,\,\tau \in {{W}_{k-1}}$,
i.e. action by inner automorphism (inner action) from $Aut{{X}^{\left[ k \right]}}$.
Note that ${{W}_{k-1}}$ is subgroup of stabilizer of  ${{X}^{k-1}}$ i.e. ${{W}_{k-1}}<St_{Aut{X}^{[k]}}(k-1)\lhd AutX^{[k]}$ and is normal too $W_{k-1}\lhd AutX^{[k]}$, because conjugation keeps a cyclic structure of permutation so even permutation maps in even. Therefore such conjugation induce automorphism of ${W}_{k-1}$ and $G_k \simeq B_{k-1}\ltimes W_{k-1}$. Since  at ${{X}^{k-1}}$ is ${{2}^{{{2}^{k-1}}}}$ vertexes and half of combinations of active states from  ${X}^{k-1}$ can form even permutation thus $\left| {{W}_{k-1}} \right|={{2}^{{{2}^{k-1}}-1}}$ that is proved in Proposition \ref{ordW}. Using the Corollary \ref{B_k-1} about ${{B}_{k-1}}$ we get order of  ${{G}_{k}}\simeq {{B}_{k-1}} \ltimes {{W}_{k-1}}$ is ${{2}^{{{2}^{k-1}}-1}}\cdot {{2}^{{{2}^{k-1}}-1}}={{2}^{{{2}^{k}}-2}}$.
 Since $G_k$ is maximal 2-subgroup then $G_k \simeq Syl_2A_{2^k}$.

\end{proof}


 \begin{theorem} \label{isomor}  
  The set $S_{\mathop{\alpha}}= \{\mathop{\alpha}_{0}, \mathop{\alpha}_{1},
 \mathop{\alpha}_{2},  ... ,\mathop{\alpha}_{k-2}, \mathop{\tau}\}$
   of elements from subgroup of $AutX^{[k]}$
   generates a group $G_k$ which isomorphic to $Syl_2(A_{2^{k}})$.
\end{theorem}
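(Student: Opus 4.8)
The plan is to prove the equality $\langle S_{\mathop{\alpha}}\rangle = G_k$ by two inclusions and then to read off the isomorphism with $Syl_2(A_{2^{k}})$ from the structure theorem and the order count already established. First I would check that $\langle S_{\mathop{\alpha}}\rangle \subseteq G_k$, i.e.\ that each generator induces an even permutation of the $2^{k}$ leaves $X^{k}$. For $\mathop{\alpha}_{i}$ with $0\le i\le k-2$ the non-trivial state lies on level $X^{i}$ with $i<k-1$, so Lemma~\ref{even} applies and the induced permutation of $X^{k}$ is even. For $\mathop{\tau}=\mathop{\tau}_{1,2^{k-1}}$ the induced permutation of $X^{k}$ is the product of the two independent involutions coming from the vertex permutations in $v_{k-1,1}$ and $v_{k-1,2^{k-1}}$, each of cyclic type $(1^{2^{k}-2},2^{1})$ on $X^{k}$, hence their product is even. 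Thus $\langle S_{\mathop{\alpha}}\rangle$ is a $2$-subgroup of $Aut X^{[k]}$ acting by even permutations on $X^{k}$, which is exactly the condition defining (a copy of) $G_k$, so the inclusion follows.

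For the reverse inclusion $G_k \subseteq \langle S_{\mathop{\alpha}}\rangle$ I would use the decomposition $G_k = B_{k-1}\ltimes W_{k-1}$ from the structure theorem above. On one hand, the subset $\{\mathop{\alpha}_{0},\dots,\mathop{\alpha}_{k-2}\}$ is a (minimal) generating set of $Aut X^{[k-1]}\simeq B_{k-1}$ (this is the fact invoked in the proof of Lemma~\ref{gen} and recorded in Corollary~\ref{B_k-1}), so $B_{k-1}\subseteq\langle S_{\mathop{\alpha}}\rangle$. On the other hand, Lemma~\ref{gen} together with Corollary~\ref{genG_k-1} shows that conjugating $\mathop{\tau}$ by suitable products of the $\mathop{\alpha}_{i}$ --- using the explicit identities $\alpha_{k-j}\mathop{\tau}\alpha_{k-j}^{-1}=\mathop{\tau}_{j,2^{k-1}}$ and the $2$-adic-expansion formula $\tau_{l,2^{k-1}} = \prod_{i=1}^{m_l} \alpha_{k-2-(m_{l}-i)}^{\delta_{l_i}}\, \tau \, \prod_{i=1}^{m_l} \alpha_{k-2-(m_{l}-i)}^{\delta_{l_i}}$ --- yields every transposition $\mathop{\tau}_{ij}$ on $X^{k-1}$, and hence a basis of the elementary abelian group $W_{k-1}\simeq(C_2)^{2^{k-1}-1}$ of Proposition~\ref{ordW}. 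Therefore $W_{k-1}\subseteq\langle S_{\mathop{\alpha}}\rangle$, and since $G_k=B_{k-1}\ltimes W_{k-1}$ we conclude $G_k\subseteq\langle S_{\mathop{\alpha}}\rangle$. Combined with the first paragraph this gives $\langle S_{\mathop{\alpha}}\rangle = G_k$.

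It remains to identify $G_k$ with $Syl_2(A_{2^{k}})$. This is immediate from the structure theorem $G_k\simeq B_{k-1}\ltimes W_{k-1}\simeq Syl_2 A_{2^{k}}$ proved above; independently it also follows from Lemma~\ref{ordG_k}, since $G_k$ acts faithfully by even permutations on the $2^{k}$ leaves (so it embeds in $A_{2^{k}}$), it is a $2$-group, and $|G_k|=2^{2^{k}-2}$ is the full power of $2$ dividing $|A_{2^{k}}|=\tfrac{1}{2}(2^{k})!$, whence $G_k$ is a Sylow $2$-subgroup of $A_{2^{k}}$.

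I do not expect a serious obstacle here: the statement is essentially a repackaging of Lemma~\ref{gen}, Lemma~\ref{ordG_k} and the structure theorem. The one point deserving care is to be sure that the conjugates of $\mathop{\tau}$ produced in Lemma~\ref{gen} actually span all of $W_{k-1}$ and not merely a proper subgroup --- equivalently, that the $2^{k-1}-1$ transpositions singled out are linearly independent in the $\mathbb{F}_2$-vector-space description of $W_{k-1}$ --- since this is precisely what forces $\langle S_{\mathop{\alpha}}\rangle$ to have order $2^{2^{k}-2}$ and makes the generating set consist of exactly $k$ elements.
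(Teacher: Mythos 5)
Your proposal is correct and follows essentially the same route as the paper: generation of $G_k$ from $S_{\alpha}$ via Lemma~\ref{gen} and Corollaries~\ref{B_k-1}, \ref{genG_k-1}, the order count of Lemma~\ref{ordG_k}, and identification with $Syl_2(A_{2^k})$ by maximality/Sylow's theorem. You merely spell out details the paper leaves implicit (the evenness of the generators' action on $X^k$ and the fact that the conjugates of $\tau$ span all of $W_{k-1}$), which is a faithful elaboration rather than a different argument.
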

 \begin{proof}
As we see from Corollary \ref{B_k-1}, Lemma \ref{gen} and Corollary \ref{genG_k-1} group $G_k$ are generated by $S_{\mathop{\alpha}}$ and their orders according to Lemma \ref{ordG_k} is equal. So according to Sylow's theorems 2-subgroup $G_k<A_{2^k}$ is $Syl_2 (A_{2^k})$.
 \end{proof}
 Consequently, we construct a generating system, which contains $k$ elements, that is less than in \cite{Iv}.

The structure of Sylow 2-subgroup of $A_{2^k}$ is the following: $\underset{i=1}{\overset{k-1}{\mathop{\wr }}}\,{{C}_{2}}  \ltimes \prod_{i=1}^{2^{k-1}-1} C_2  $, where we take $C_2$ as group of action on two elements and this action is faithful, it adjusts with construction of normalizer for $Syl_p(S_n)$ from \cite{Weisner}, where it was said that $Syl_2(A_{2^l})$ is self-normalized  in $S_{2^l}$.

Number of such minimal generating systems for ${{G}_{k}}$ is $C_{{{2}^{k-2}}}^{1}C^1_{{2}^{k-2}}=2^{k-2} \cdot 2^{k-2}= 2^{2k-4}$.

Let us present new operation $\boxtimes $ (similar to that is in \cite{Dm}) as a subdirect product of $Syl{{S}_{{{2}^{i}}}}$, $n=\sum_{i=0} ^j a_{k_i} {2}^{i}$, i.e. $Syl{{S}_{{{2}^{{{k}_{1}}}}}}\boxtimes Syl{{S}_{{{2}^{{{k}_{2}}}}}}\boxtimes ...\boxtimes Syl{{S}_{{{2}^{{{k}_{l}}}}}}=Par(Syl{{S}_{{{2}^{{{k}_{1}}}}}}\times Syl{{S}_{{{2}^{{{k}_{2}}}}}}\times ...\times Syl{{S}_{{{2}^{{{k}_{l}}}}}})$, where $Par(G)$ -- set of all even permutations of $G$. Note, that $\boxtimes $ is not associated operation, for instance $ord({{G}_{1}}\boxtimes {{G}_{2}}\boxtimes {{G}_{3}})\,\,\,=\left| {{G}_{1}}\times {{G}_{2}}\times {{G}_{3}} \right|:2$ but $ord({{G}_{1}}\boxtimes {{G}_{2}})\boxtimes {{G}_{3}}\,\,\,=\left| {{G}_{1}}\times {{G}_{2}}\times {{G}_{3}} \right|:4$. For cases $n=4k+1$, $n=4k+3$ it follows from formula of Legendre.

\begin {statment}
 Sylow subgroups of ${{S}_{n}}$ is isomorphic to $Syl_2{{S}_{{{2}^{{{k}_{1}}}}}}\times Syl_2{{S}_{{{2}^{{{k}_{2}}}}}}\times ...\times Syl_2{{S} _{{{2}^{{{k}_{l}}}}}}$, where $n={{2}^{{{k}_{1}}}}+{{2}^{{{k}_{2}}}}+...+{{2}^{{{k}_{l}}}}$, ${{k}_{i}}\ge 0$, $k_{i} < k_{i-1}$. Sylow subgroup $Sy{{l}_{2}}({{A}_{n}})$ has index 2 in $Syl_{2}({{S}_{n}})$ and it's structure: $Syl_2{S_{2^{{{k}_{1}}}}}\boxtimes Syl_2{{S}_{{{2}^{{{k}_{2}}}}}}\boxtimes ...\boxtimes Syl_2{S}_{{2}^{{k}_{l}}}$.
 \end {statment}

\begin{proof} Group $Sy{{l}_{2}}({{S}_{{{2}^{k}}}})$ is isomorphic to $\underbrace{{{C}_{2}}\wr {{C}_{2}}\wr ...\wr {{C}_{2}}}_{k-1}$ \cite{Dm, Sk} and this group is isomorphic to $AutX^{[k]}$, that acts at ${{2}^{k}}$ vertices on ${{X}^{k}}$. In case $Sy{{l}_{2}}({{S}_{n}}),\,\,\, \nexists k,k\in \mathbb{N}:\,\,  n = {{2}^{k}}$ but  $n={{2}^{{{k}_{1}}}}+\,{{2}^{{{k}_{2}}}}+...+{{2}^{{{k}_{l}}}},\,\,{{k}_{i}}\in \mathbb{N}\cup \left\{ 0 \right\}$  is a direct product: $Aut{{X}^{{[{k}_{1}]}}}\times ...\times Aut{{X}^{{[{k}_{l}]}}}$ \cite{Sh}. Power of 2 in this $|A_n|$: $\frac{2^{k_1}}{2}+\frac{2^{k_1}}{4}+...+1+\frac{2^{k_1}}{2}+\frac{2^{k_2}}{4}+...+1+\frac{2^{k_l}}{2}+...+1 - (\underbrace{1 + ... +1}_l)$. But only half of these automorphism determines even permutations on $X^k$ so we have been subtracted 1 $l$ times. For counting order of $Aut{{T}_{{{k}_{1}}}}\times ...\times Aut{{T}_{{{k}_{l}}}}$ we have to take in consideration that $AutX^{[k_i]}$ has active states on levels $X^0$, $X^1$, ... ,$X^{k_i-1}$ and
hasn't active states on $X^{k_i}$ so $|AutX^{[k_i]}|=1\cdot 2 \cdot 2^2 \cdot...\cdot 2^{k_i-1}=2^{2^{k_i}}-1$.

 Such presentations is unique because it determines by binary number presentation  $n={{2}^{{{k}_{1}}}}+\,\,...\,\,+{{2}^{{{k}_{l}}}},\,\,{{k}_{i}}\in \mathbb{N}\cup \left\{ 0 \right\}$, exactly for this presentation corresponds decomposition in direct product $Sy{{l}_{2}}{{S}_{{{2}^{{{k}_{1}}}}}}\times Sy{{l}_{2}}{{S}_{{{2}^{{{k}_{2}}}}}}\times ...\times Sy{{l}_{2}}{{S}_{{{2}^{{{k}_{l}}}}}}$ and this decomposition determines Sylow 2-subgroup of maximal order by unique way. It is so, since for alternating decomposition ${{2}^{{{k}_{1}}}}={{2}^{l}}+{{2}^{l}}$ order: $\left| Sy{{l}_{2}}{{S}_{{{2}^{{{k}_{1}}}}}} \right|>\left| Syl_{2}{{S}_{{{2}^{l}}}}\times Sy{{l}_{2}}{{S}_{{{2}^{l}}}} \right|$ or more precisely $\left| Sy{{l}_{2}}{{S}_{{{2}^{{{k}_{1}}}}}} \right|=2\left| Sy{{l}_{2}}{{S}_{{{2}^{l}}}}\times Sy{{l}_{2}}{{S}_{{{2}^{l}}}} \right|$ it following from structure of group binary trees.

 For instance $Syl_{2}{{S}_{22}}\simeq Sy{{l}_{2}}{{S}_{16}}\times Sy{{l}_{2}}{{S}_{4}}\times Sy{{l}_{2}}{{S}_{2}}$ and correspondent orders ${{2}^{15}},\,\,{{2}^{3}},\,\,2$ so it's order is ${{2}^{19}}={{2}^{15}}\cdot {{2}^{3}}\cdot 2$, on the other hand order of  $Sy{{l}_{2}}{{S}_{22}}$  by formula of Legendre is ${{2}^{19}}={{2}^{11+5+2+1}}$, analogously $Sy{{l}_{2}}{{S}_{24}}\simeq Sy{{l}_{2}}{{S}_{16}}\times Sy{{l}_{2}}{{S}_{8}}$ and ${{2}^{22}}={{2}^{15}}\cdot {{2}^{7}}$, on the other hand ${{2}^{22}}={{2}^{12+6+3+1}}$. Let us prove that such decompositions of $Sy{{l}_{2}}{{S}_{n}}$ and $Aut{{T}_{n}}$ are unique in accord with $n={{2}^{{{k}_{1}}}}+{{2}^{{{k}_{2}}}}+...+{{2}^{{{k}_{m}}}}$, where ${{k}_{1}}>{{k}_{2}}>...>{{k}_{m}}\ge 0$.

Really in accord with Legender's formula $n!$ is divisible by 2 in power $n-m={{2}^{{{k}_{1}}}}+{{2}^{{{k}_{2}}}}+...+{{2}^{{{k}_{m}}}}-m$ so $\left| Syl_2 {{S}_{22}} \right|={{2}^{n-m}}$, group $Aut{{T}_{{{k}_{1}}}}\times ...\times Aut{{T}_{{{k}_{m}}}}$, where ${{k}_{1}}>{{k}_{2}}>...>{{k}_{m}}\ge 0$, also has order ${{2}^{{{2}^{{{k}_{1}}-1}}+...+{{2}^{{{k}_{m}}-1}}}}={{2}^{n-m}}$ it follows from formula of geometric progression. Such decomposition $Sy{{l}_{2}}{{S}_{{{2}^{{{k}_{1}}}}}}\times Sy{{l}_{2}}{{S}_{{{2}^{{{k}_{2}}}}}}\times ...\times Sy{{l}_{2}}{{S}_{{{2}^{{{k}_{l}}}}}}$ determines 2-Sylows subgroup of maximal order. It is so, since for alternating decomposition ${{2}^{{{k}_{1}}}}={{2}^{l}}+{{2}^{l}}$ order: $\left| Sy{{l}_{2}}{{S}_{{{2}^{{{k}_{1}}}}}} \right|>\left| Sy{{l}_{2}}{{S}_{{{2}^{l}}}}\times Sy{{l}_{2}}{{S}_{{{2}^{l}}}} \right|$ or more precisely $\left| Sy{{l}_{2}}{{S}_{{{2}^{{{k}_{1}}}}}} \right|=2\left| Sy{{l}_{2}}{{S}_{{{2}^{l}}}}\times Sy{{l}_{2}}{{S}_{{{2}^{l}}}} \right|$ it
following from structure of group binary trees ($Aut{{T}_{l}}\times Aut{{T}_{l}}$) which correspondent for $Sy{{l}_{2}}{{S}_{{{2}^{l}}}}\times Sy{{l}_{2}}{{S}_{{{2}^{l}}}}$ and formula of geometric progression. It also follows from nesting (embeding) of Sylows subgroups. If $n={{2}^{{{k}_{1}}}}+{{2}^{{{k}_{2}}}}={{2}^{l}}+{{2}^{l}}+{{2}^{{{k}_{3}}}},$ where ${{2}^{{{k}_{1}}}}={{2}^{l}}+{{2}^{l}}$, then $Aut{{T}_{n}}\simeq Aut{{T}_{{{k}_{1}}}}\times Aut{{T}_{{{k}_{2}}}}$, if not then $Sy{{l}_{2}}{{A}_{n}}$ don't contains $Sy{{l}_{2}}{{A}_{{{2}^{{{k}_{1}}}}}}$.
\end{proof}

\begin{remark}
If $n=2k+1$ then
$Syl_2(A_n) \cong Syl_2(A_{n-1})$ and
$Syl_2(S_n) \cong Syl_2(S_{n-1})$.
\end{remark}
 \begin {proof}
 Orders of these subgroups are equal since according to Legender's formula which counts power of 2 in $(2k+1)!$ and $(2k)!$ we get that these powers are equal. So these maximal 2-subgroups are isomorphic. From Statement 1 can be obtained that vertex with number $2k+1$ will be fixed to hold even number of transpositions on $X^{k_1}$ from decomposition of $n$ which is in Statement 1. For instance $Syl_2(A_{7})\simeq Syl_2(A_{6})$ and by the way $Syl_2(A_{6})\simeq C_2 \wr C_2 \simeq D_4$, $Syl_2(A_{11})\simeq Syl_2(A_{10}) \simeq C_2 \wr C_2 \wr C_2 $.
\end {proof}

\begin {definition}
 We call index of automorphism $\beta$ on $X^l$ number on active states of $\beta$ on $X^l$.
\end {definition}

\begin {definition} Define a generator of type \texttt{T} as an automorphism ${{\tau }_{{{i}_{0}},...,{{i}_{{{2}^{k-1}}}};{{j}_{{{2}^{k-1}}}},...,{{j}_{{2}^k}}}}$, that has even index at ${{X}^{k-1}}$ and ${{\tau }_{{{i}_{0}},...,{{i}_{{{2}^{k-1}}}};{{j}_{{{2}^{k-1}}}},...,{{j}_{{{2}^{k}}}}}}\in S{{t}_{Aut{{X}^{k}}}}(k-1)$,
and it consists of odd number of active states in vertexes ${{v}_{k-1,j}}$ with number $ j \leq  2^{k-2}$ and odd number in vertices ${{v}_{k-1,j}}$, ${2^{k-2}} < j \leq {2^{k-1}}$. Set of such elements denote \texttt{T}.
\end {definition}
\begin {definition}
A combined generator is such an automorphism ${{\beta }_{{l}; \tilde{\tau } }}$, that the restriction ${{\beta }_{{l};\tilde{\tau } }}\left| _{{{X}^{k-1}}} \right.$ coincides with ${{\alpha }_{{l}}}$ and $Rist_{<\beta_{{i_l}; \tilde{ \tau}  }>}(k-1)=\left\langle \tau' \right\rangle $, where $\tau' \in $\texttt{T}. The set of such elements is denoted by \texttt{CG}.
\end {definition}

\begin {definition} A combined element is such an automorphism $ {{\beta }_{1,{{i}_{1}};2,{{i}_{2}};...;k-1,{{i}_{k-1}}; \tilde{\tau }}}$, that it's restriction
${{\beta}_{1,{{i}_{1}};2,{{i}_{2}};...;k-1,{{i}_{k-1}};\tilde{\tau }}}\left|_{{{X}_{k-1}}} \right.$ coincide with one of elements that can be generated by ${{S}_{\alpha}}$ and $Rist_{<{{\beta}_{1,{{i}_{1}};2,{{i}_{2}};...;k-1,{{i}_{k-1}}; \tilde{ \tau} }}>}(k-1)  =\left\langle \tau' \right\rangle $ where $\tau' \in $\texttt{T}. Set of such elements is denoted by \texttt{C}.
\end {definition}
In other word elements $g \in$\texttt{C} and $g' \in$\texttt{CG} on level  ${{X}^{k-1}}$ have such structure as element and generator of type \texttt{T}.

The minimal number of elements in a generating set  $S$
of G we denote by rk$G$ and call the rank of $S$.

By distance between vertexes we shall understand usual distance at graph between its vertexes.
By distance of vertex permutations we shall understand maximal distance between two vertexes with active states from $X^{k-1}$.

\begin{lemma} \label{Lemma about keeping of distance} A vertices permutations on ${X}^{k}$ that has distance ${{d}_{0}}$ can not be generated by vertex permutations with distance ${{d}_{1}}:\,\,{{d}_{1}}<{{d}_{0}}$.
\end{lemma}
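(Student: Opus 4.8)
The plan is to exhibit a homomorphic invariant of $W_{k-1}$ which is respected by every product of vertex permutations of distance $<d_{0}$ but is violated by a vertex permutation of distance $d_{0}$. Recall first that, by Proposition~\ref{ordW}, the vertex permutations supported on $X^{k-1}$ form the elementary abelian group $W_{k-1}\simeq (C_{2})^{2^{k-1}-1}$; hence a product of such automorphisms acts on $X^{k-1}$ by the symmetric difference of the corresponding sets of active vertices. Moreover conjugation by an element of $B_{k-1}$ is induced by an automorphism of the rooted tree $X^{[k-1]}$, so it permutes the vertices of $X^{k-1}$ while preserving the graph distance between any two of them. Now fix the critical level $m=k-\frac{d_{0}}{2}$ and split $X^{k-1}$ into the $2^{m}$ blocks $D_{1},\dots,D_{2^{m}}$, where $D_{s}$ consists of the vertices of $X^{k-1}$ lying below the $s$-th vertex $v_{m,s}$ of $X^{m}$. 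Since two vertices of $X^{k-1}$ are at distance $\le 2(k-1-m)=d_{0}-2$ precisely when they have a common ancestor on $X^{m}$, an automorphism whose vertex permutation has distance $<d_{0}$ has all of its active vertices inside a single block $D_{s}$.

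Next I would introduce the $\mathbb{F}_{2}$-linear map $\pi\colon W_{k-1}\to\mathbb{F}_{2}^{\,2^{m}}$ assigning to a vertex permutation the vector whose $s$-th coordinate is the parity of the number of its active vertices in $D_{s}$. Because the group law of $W_{k-1}$ is symmetric difference of supports, $\pi$ is a group homomorphism, and it is $B_{k-1}$-equivariant for the permutation action of $B_{k-1}$ on the blocks. If a vertex permutation has distance $<d_{0}$, then by the previous paragraph its support lies in one block $D_{s}$ and has an even number of elements --- an element of $W_{k-1}$ is an even permutation of $X^{k}$ and each active vertex contributes exactly one transposition of leaves --- so $\pi$ annihilates it. Consequently $\pi$ annihilates the whole subgroup generated by vertex permutations of distance $<d_{0}$, and, by equivariance, even the subgroup generated by them together with all their $B_{k-1}$-conjugates, so allowing conjugation cannot help either.

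It then suffices to check that the given vertex permutation $g$ of distance $d_{0}$ is not annihilated by $\pi$, and this is the heart --- and the main obstacle --- of the argument. Choose active vertices $u,u'$ of $g$ with $d(u,u')=d_{0}$; their common ancestor lies on $X^{m-1}$, so $u$ and $u'$ fall into two distinct blocks $D_{s}\ni u$ and $D_{t}\ni u'$. For the vertex permutations to which the lemma is applied --- a transposition $\tau_{i,j}$ realising the distance $d_{0}$, or a generator of type \texttt{T}, whose definition forces an odd number of active vertices among $v_{k-1,1},\dots,v_{k-1,2^{k-2}}$ and an odd number among the remaining vertices of $X^{k-1}$ --- the support of $g$ meets $D_{s}$ (or $D_{t}$) in an odd number of vertices, so $\pi(g)\ne 0$ and $g$ lies outside the subgroup above, which proves the claim. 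I expect exactly this step, the verification of $\pi(g)\ne 0$, to require the most care, since it is where the hypothesis on $g$ is genuinely used: a support meeting every block $D_{s}$ evenly can itself be written as a product of vertex permutations of distance $<d_{0}$, so an ``odd parity inside a block'' property of $g$ is precisely what must (and, for the relevant $g$, does) hold.
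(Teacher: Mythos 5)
Your proposal is correct in substance, but it takes a genuinely different route from the paper's. The paper's own proof is a short appeal to isometry: tree automorphisms preserve the graph metric, so conjugation sends a vertex permutation of distance $d_0$ to one of distance $d_0$, and (it asserts) a product of vertex permutations of distance $d_1$ again has distance $d_1$. You instead build an explicit invariant: cut $X^{k-1}$ into the $2^m$ blocks below the vertices of level $m=k-d_0/2$, record the block-wise parities of the support as an $\mathbb{F}_2^{\,2^m}$-valued map on $W_{k-1}$ (Proposition \ref{ordW} makes this a homomorphism), note that any vertex permutation of distance $<d_0$ has even support contained in a single block and hence is annihilated, that the map is $B_{k-1}$-equivariant so conjugates are annihilated as well, and that the targets of interest have odd parity in some block and so survive. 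What your approach buys is rigor exactly where the paper is weakest: the claim that a product of distance-$d_1$ portraits has distance $d_1$ is false as stated (already for $k=3$, $\tau_{1,2}\tau_{3,4}$ has distance $4$ yet is the product of two distance-$2$ elements of $W_2$), and your closing observation locates the true boundary of the lemma --- it holds precisely for targets with odd parity in some block, which covers the transpositions $\tau_{i,j}$ realizing the distance and the elements of type \texttt{T} used in Lemma \ref{about transposition} and Corollary \ref{About generating distance}, but fails for even-block-parity supports such as $\tau_{1,2}\tau_{3,4}$; so you have in effect proved the corrected statement that the paper actually needs, rather than the literal one. The only addition I would ask for: since the corollary is applied to generators from $Aut\,v_{1,1}X^{[k-1]}$, which carry states above level $k-1$, the block-parity vector is no longer a homomorphism on all of $G_k$ but only a cocycle; one extra sentence noting that such elements merely permute the blocks, so the set of elements with all block parities even is still closed under these products, would make your argument cover that usage too.
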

\begin{proof}
Really vertex permutation ${{\tau }_{ij}}:\,\,\,\,\,\rho ({{\tau }_{ij}})={{d}_{0}},\,\,\,{{d}_{0}}<{{d}_{1}}$ can be mapped by automorphic mapping only in permutation with  distance ${{d}_{0}}$ because automorphism keep incidence relation and so it possess property of isometry. Also multiplication of portrait of automorphism ${{\tau }_{ij}}:\,\,\,\,\,\rho ({{\tau }_{ij}})={{d}_{1}}$ give us automorphism with distance ${{d}_{1}}$, it follows from properties of group operation and property of automorphism to keep a distance between vertices on graph.
\end{proof}

\begin{lemma} \label{about transposition} An arbitrary automorphism $\tau{'} \in$\texttt{T} (or in particular $\tau $ ) can be generated only with using odd number of automorphisms from \texttt{C} or \texttt{T}.
\end{lemma}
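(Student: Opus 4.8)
The plan is to produce a homomorphism $\varphi$ onto $C_2$, defined on a subgroup of $\mathrm{Aut}\,X^{[k]}$ containing all the elements that occur, which sends every member of $\texttt{C}\cup\texttt{T}$ — in particular $\tau$ — to the non-trivial element of $C_2$. Given such $\varphi$, any factorisation $\tau'=g_1g_2\cdots g_m$ with each $g_i\in\texttt{C}\cup\texttt{T}$ yields $1=\varphi(\tau')=\varphi(g_1)\cdots\varphi(g_m)$, so that $m$ copies of the generator of $C_2$ multiply to the generator; this is possible only for odd $m$, which is exactly the assertion.

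For the construction I would use only the following property of $\texttt{C}\cup\texttt{T}$: by the definition of a generator of type \texttt{T}, together with the remark that elements of \texttt{C} and \texttt{CG} have on $X^{k-1}$ the same structure as such a generator, every $g\in\texttt{C}\cup\texttt{T}$ has an \emph{odd} number of non-trivial vertex permutations among the vertices $L$ of $X^{k-1}$ lying below $v_{1,1}$ and an odd number among the vertices $R$ of $X^{k-1}$ lying below $v_{1,2}$; in particular an even number on the whole of $X^{k-1}$. For $g\in\mathrm{Aut}\,X^{[k]}$ and a subset $S$ of $X^{k-1}$ write $n_S(g)\in\mathbb{F}_2$ for the parity of the number of vertices of $S$ at which $g$ has a non-trivial vertex permutation. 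From the composition rule for tree automorphisms — the vertex permutation of $gh$ at a vertex $v$ is the product of that of $h$ at $v$ with that of $g$ at $h(v)$ — and from the fact that $v\mapsto h(v)$ permutes $X^{k-1}$, summing over $X^{k-1}$ shows that $\pi:=n_{X^{k-1}}$ is a homomorphism to $C_2$. Set $K:=\ker\pi$; then $\texttt{C}\cup\texttt{T}\subseteq K$ and $\tau'\in K$ by the previous sentence, and every $g\in K$ satisfies $n_L(g)=n_R(g)$ since $n_L(g)+n_R(g)=\pi(g)=0$.

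Now I claim $\varphi:=n_L|_{K}$ is a homomorphism $K\to C_2$. Summing the same composition rule over $v\in L$ gives $n_L(gh)=n_{h(L)}(g)+n_L(h)$. Any automorphism of the tree, through its root vertex permutation, either fixes each of $L$ and $R$ setwise or interchanges them (this is the imprimitive block structure of $B_{k-1}=\underbrace{C_2\wr\cdots\wr C_2}_{k-1}$ on $X^{k-1}$, cf. Corollary \ref{B_k-1}); hence $h(L)\in\{L,R\}$. If $h(L)=L$ then $n_{h(L)}(g)=n_L(g)$, while if $h(L)=R$ then $n_{h(L)}(g)=n_R(g)=n_L(g)$ because $g\in K$; so in both cases $n_L(gh)=n_L(g)+n_L(h)$. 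Moreover $\varphi(g)=n_L(g)=1$ for every $g\in\texttt{C}\cup\texttt{T}$ and $\varphi(\tau')=n_L(\tau')=1$ — for $\tau=\tau_{1,2^{k-1}}$ the unique non-trivial vertex permutation in $L$ is the one at $v_{k-1,1}$. Therefore, if $\tau'=g_1\cdots g_m$ with all $g_i\in\texttt{C}\cup\texttt{T}\subseteq K$, then $\tau'\in K$ and $1=\varphi(\tau')=\sum_{i=1}^m\varphi(g_i)\equiv m\pmod 2$, so $m$ is odd; replacing any factor by its inverse changes nothing, since every element of $C_2$ is self-inverse.

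The step I expect to be the main obstacle is precisely the additivity of $n_L$ on $K$: one has to see that the possible exchange $L\leftrightarrow R$ carried out by the upper levels of an automorphism is neutralised exactly on the subgroup $K$, where $n_L=n_R$. This in turn relies on the imprimitive block structure $\{L,R\}$ of the action of $B_{k-1}$ on $X^{k-1}$ and on the portrait-multiplication rule for automorphisms of the rooted binary tree, both of which should be recorded carefully; with them in hand, the parity bookkeeping is routine.
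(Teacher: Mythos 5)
Your argument is correct, and its kernel --- counting active states mod 2 separately on the left half $L$ and the right half $R$ of $X^{k-1}$, so that every element of \texttt{C} or \texttt{T} contributes $1$ to each count and an even number of such factors can never reproduce the odd--odd profile that defines type \texttt{T} --- is the same parity idea the paper's proof rests on. The difference is in how it is justified. The paper first appeals to the distance Lemma \ref{Lemma about keeping of distance} to argue that only elements of type \texttt{T} or \texttt{C} can supply vertex permutations of maximal distance, and then simply asserts that a product of an even number of \texttt{T}-elements has an even number of active states among the vertices $v_{k-1,i}$, $i\le 2^{k-2}$; it never addresses the fact that a factor acting non-trivially above level $k-1$ may interchange the two halves of $X^{k-1}$, which is exactly where naive parity bookkeeping could fail. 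You drop the distance argument entirely and instead make the parity count rigorous: the total parity $\pi=n_{X^{k-1}}$ is a homomorphism, and on its kernel $K$ (where $n_L=n_R$) the left-half parity $n_L$ is again a homomorphism precisely because the only possible displacement of $L$ is onto $R$, where the two parities coincide. That verification is the step the paper glosses over, so your route buys genuine rigour at the cost of not reusing the distance lemma. One point you should state explicitly: as the lemma is applied later (for instance in Lemma \ref{Lem About product elem of C}), the word expressing $\tau'$ may also contain factors that are \emph{not} of type \texttt{C} or \texttt{T}, whereas you only treat factorizations with all factors in $\texttt{C}\cup\texttt{T}$. Your construction already covers the general case at no extra cost --- every element of $G_k$ is an even permutation of $X^{k}$, hence has even total index on $X^{k-1}$ and lies in $K$, and any element of $G_k$ outside $\texttt{C}\cup\texttt{T}$ has even--even profile, so $\varphi$ vanishes on it and the sum $\varphi(g_1)+\dots+\varphi(g_m)=1$ counts exactly the \texttt{C}$\cup$\texttt{T} factors --- but this sentence needs to be said for the lemma as stated.
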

\begin{proof}
Let us assume that there is no such element which has distance of v. p. $2k$  ${{\tau }_{ij}}$ then accord to Lemma \ref{Lemma about keeping of distance} it is imposable to generate are pair of transpositions $\tau{'} $ with distance $\rho ({{\tau }_{ij}})=2k$ since such transpositions can be generated only by ${{\tau }_{ij}}$ that described in the conditions of this Lemma: $i\le {{2}^{k-2}},\,\,\,j>{{2}^{k-2}}$. Combined element can be decomposed in product $ \tau \dot {\beta }_{{i}_{l}} = {{\beta }_{{{i}_{l}};\tau }} $ so we can express by using $\tau$ or using a product where odd number an elements from \texttt{T} or \texttt{C}. If we consider product $P$ of even number elements from \texttt{T} then automorphism $P$ has even number of active states in vertexes ${{v}_{k-1,i}}$ with number $ i \leq  2^{k-2}$ so $P$ does not satisfy the definition of generator of type \texttt{T}.
\end{proof}
\begin{corollary} \label{About generating distance} Any element of type \texttt{T} can not be generated by ${{\tau }_{ij}}\in Aut {{v}_{1,1}}{{X}^{[k-1]}}$ and ${{\tau }_{ml}}\in Aut {{v}_{1,2}}{{X}^{[k-1]}}$. The same corollary is true for a combined element.
\end{corollary}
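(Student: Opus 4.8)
The plan is to read this off the parity bookkeeping that already appears in the proof of Lemma~\ref{about transposition}, rather than from the distance Lemma~\ref{Lemma about keeping of distance} by itself; the latter is not enough, since the product of a single $\tau_{ij}$ from $Aut\,v_{1,1}X^{[k-1]}$ with a single $\tau_{ml}$ from $Aut\,v_{1,2}X^{[k-1]}$ already contains a pair of active states at the maximal possible distance (one active vertex in each half of $X^{k-1}$, with common ancestor the root). So the obstruction must be a parity one. First I would record the structural observation: every $\tau_{ij}\in Aut\,v_{1,1}X^{[k-1]}$ has its two active states among the vertices $v_{k-1,t}$ with $t\le 2^{k-2}$, i.e. index $2$ on the ``left'' half of $X^{k-1}$ and index $0$ on the ``right'' half $\{v_{k-1,t}:2^{k-2}<t\le 2^{k-1}\}$; symmetrically, every $\tau_{ml}\in Aut\,v_{1,2}X^{[k-1]}$ has index $0$ on the left half and index $2$ on the right half. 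All such generators act trivially on levels $0,\dots,k-1$ (their only vertex permutations sit at level $k-1$ and swap children at level $k$), hence they lie in $St_{Aut X^{[k]}}(k-1)$ and fix every vertex of $X^{k-1}$.

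The key bookkeeping step is then: for $g_1,\dots,g_n$ each fixing every vertex of $X^{k-1}$, the vertex permutation of the product $g_1\cdots g_n$ at a vertex $v_{k-1,t}$ equals the product in $C_2$ of the vertex permutations of the $g_j$ at $v_{k-1,t}$ (no ``carrying'' between distinct vertices of $X^{k-1}$ occurs because none of the $g_j$ permutes the vertices of that level). Consequently, for any word $w$ in the generators $\tau_{ij}\in Aut\,v_{1,1}X^{[k-1]}$ and $\tau_{ml}\in Aut\,v_{1,2}X^{[k-1]}$, the parity of the index of $w$ on the left half of $X^{k-1}$ is a sum of $2$'s and $0$'s, hence even, and likewise for the right half. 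But by definition an element of type \texttt{T} has \emph{odd} index on the left half of $X^{k-1}$ and \emph{odd} index on the right half; so it cannot coincide with such a word $w$. This proves the first assertion, and it is the same parity argument already used in Lemma~\ref{about transposition} (where it was applied to products of elements of \texttt{T}).

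For the combined element I would argue as follows. Let $c$ be a combined element, so $Rist_{\langle c\rangle}(k-1)=\langle\tau'\rangle$ with $\tau'\in\texttt{T}$ and the restriction $c|_{X^{k-1}}$ agrees with an element generated by $S_{\alpha}$. Suppose $c$ were a word in the stated $\tau_{ij},\tau_{ml}$. Since all those generators lie in $St_{Aut X^{[k]}}(k-1)$, so would $c$; hence the $S_{\alpha}$-part of $c$ on levels $0,\dots,k-2$ is trivial, and $c$ is itself an element of $St_{Aut X^{[k]}}(k-1)$ whose vertex-permutation vector on $X^{k-1}$ is of type \texttt{T}. The previous paragraph already rules this out. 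The one point that needs care — and the step I expect to be the main obstacle to write cleanly — is precisely the portrait-multiplication bookkeeping of the second paragraph: one must check that when these particular generators are multiplied the per-vertex vertex permutations on $X^{k-1}$ add coordinatewise in $(C_2)^{2^{k-1}}$, which is valid exactly because each generator fixes all vertices of $X^{k-1}$, so the level-$k$ actions attached to different level-$(k-1)$ vertices never mix. Once this is in place, ``parity of index on the left half'' and ``parity of index on the right half'' are homomorphisms to $C_2$, and both kill every admissible generator, which is all we need.
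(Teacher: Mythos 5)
Your argument is correct, but it is not the argument the paper gives. The paper derives this corollary directly from Lemma~\ref{Lemma about keeping of distance}: the permutations $\tau_{ij}\in Aut\,v_{1,1}X^{[k-1]}$ (and likewise $\tau_{ml}\in Aut\,v_{1,2}X^{[k-1]}$) have both active vertices inside one subtree, hence distance strictly less than $2k-2$, while elements of type \texttt{T} and \texttt{C} realize the maximal distance $2k-2$, so by that lemma they cannot be generated. You deliberately avoid this route, and your reason is sound: a single product $\tau_{ij}\tau_{ml}$ already has one active vertex in each half of $X^{k-1}$, hence distance $2k-2$, so ``distance cannot increase under multiplication'' is not by itself the operative obstruction; what actually separates type \texttt{T} from such products is parity, not distance. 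Your replacement argument --- each admissible generator contributes $0$ or $2$ active states to each half of $X^{k-1}$, all generators fix $X^{k-1}$ pointwise so the level-$(k-1)$ states multiply coordinatewise in $(C_2)^{2^{k-1}}$, hence any word has even index on each half, whereas type \texttt{T} demands odd index on each half --- is complete, and your reduction of the combined-element case (membership in $St_{AutX^{[k]}}(k-1)$ forces the restriction to $X^{[k-1]}$ to be trivial, so the element is again ruled out by the parity count) is also fine. In effect you prove the corollary in the spirit of Lemma~\ref{about transposition} and Lemma~\ref{About not closed set of element of type T} rather than Lemma~\ref{Lemma about keeping of distance}: the paper's derivation is shorter but leans on the fragile blanket claim in the distance lemma, while your parity homomorphisms onto $C_2$ (one per half of $X^{k-1}$) give a self-contained and more robust proof at the cost of redoing the bookkeeping the paper wanted to delegate to that lemma.
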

\begin{proof}
It can be obtained from the Lemma \ref{Lemma about keeping of distance} because such ${\tau_{ij}}\in Aut {{v}_{1,1}}{{X}^{[k-1]}}$ has distance less then $2k-2$ so it does not satisfy conditions of the Lemma \ref{Lemma about keeping of distance}. I.e. $\tau{'}$ can not be generated by vertices permutations with distance between vertices less than $2k-2$ such distance has only automorphisms of type \texttt{T} and \texttt{C}. But elements from $ Aut {{v}_{1,1}}{{X}^{[k-1]}}$ do not belongs to type \texttt{T} or \texttt{C}.  
\end{proof}

It's known that minimal generating system ${{S}_{\alpha}}$ of $Aut{{T}_{k-1}}$ has $k-1$ elements \cite{Gr}, so if we complete it by $\tau$ (or element of type \texttt{T} or change one of its generators on $\beta \in \text{T}$ we get system ${{S}_{\beta }}$: ${{G}_{k}}\simeq \left\langle {{S}_{\beta }} \right\rangle $ and $ |{{S}_{\beta }}|= k$.
So to construct combined element we multiply generator ${{\beta }_{i}}$ of ${{S}_{\beta }}$ (or arbitrary element $\beta $ that can be express from ${{S}_{\beta }}$) by the element of type $\tau $, i.e. we take $\tau' \cdot {{\beta }_{i}}$ instead of ${{\beta }_{i}}$ and denote it  ${{\beta }_{i;\tilde{\tau} }}$. It's equivalent that $Ris{{t}_{{{\beta }_{1,i;2,...,j;\tau }}}}(k)=\left\langle \tau' \right\rangle $, where $\tau'$ -- generator of type \texttt{T}. 

\begin{lemma} \label{About not closed set of element of type T} Sets of elements of types \texttt{T}, \texttt{C}  are not closed by multiplication and raising to even power.
\end{lemma}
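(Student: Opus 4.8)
The plan is to use the semidirect decomposition $G_k\simeq B_{k-1}\ltimes W_{k-1}$ from Lemma~\ref{ordG_k} together with the parity condition built into the definitions of \texttt{T} and \texttt{C}. By the definition of a generator of type \texttt{T}, and by the remark following the definition of \texttt{C}, every $g\in\texttt{T}\cup\texttt{C}$ has, among the vertices $v_{k-1,j}$ of $X^{k-1}$, an \emph{odd} number of active states with $j\leq 2^{k-2}$ and an \emph{odd} number with $2^{k-2}<j\leq 2^{k-1}$ (in particular an even index on $X^{k-1}$). I will show that the product of two such automorphisms, and any even power of one such automorphism, has an \emph{even} number of active states on $X^{k-1}$ in each of these two halves; hence it lies neither in \texttt{T} nor in \texttt{C}, which is more than enough to conclude that neither set is closed under multiplication or under raising to an even power.

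First I would record the effect of the group operations on the configuration of active states on $X^{k-1}$. Write $g=bw$ with $b\in B_{k-1}$, $w\in W_{k-1}$. By Corollary~\ref{B_k-1} the factor $b$ carries no active states on $X^{k-1}$, so the active states of $g$ on $X^{k-1}$ are precisely those of $w$; and since $W_{k-1}\simeq(C_2)^{2^{k-1}-1}$ is elementary abelian (Proposition~\ref{ordW}), the configuration of a product of elements of $W_{k-1}$ on $X^{k-1}$ is the iterated symmetric difference of the individual configurations. Conjugation $w\mapsto b^{-1}wb$ permutes the active states of $w$ by the permutation that $b$ induces on the level $X^{k-1}$, and this permutation --- being the restriction to one level of a tree automorphism --- either fixes or interchanges the two subtrees $v_{1,1}X^{[k-1]}$ and $v_{1,2}X^{[k-1]}$; in either case it preserves, as an unordered partition, the splitting of $X^{k-1}$ into the ``left half'' $\{v_{k-1,j}:j\leq 2^{k-2}\}$ and the ``right half'' $\{v_{k-1,j}:2^{k-2}<j\leq 2^{k-1}\}$. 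Consequently, whenever $g\in\texttt{T}\cup\texttt{C}$, every conjugate $w^{b'}$ ($b'\in B_{k-1}$) still has an odd number of active states in each of these two halves.

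Now the two assertions follow by counting parities. For $g_1=b_1w_1$ and $g_2=b_2w_2$ in $\texttt{T}\cup\texttt{C}$ one has $g_1g_2=(b_1b_2)(w_1^{b_2}w_2)$, so the active states of $g_1g_2$ on $X^{k-1}$ are the symmetric difference of those of $w_1^{b_2}$ and of $w_2$; by the previous paragraph each of these two configurations has an odd number of active states in each half, hence their symmetric difference has an even number in each half, so $g_1g_2\notin\texttt{T}$ and $g_1g_2\notin\texttt{C}$. For powers, the same computation iterated gives $g^{n}=b^{n}\prod_{i=0}^{n-1}w^{b^{i}}$ in $G_k$, so the active states of $g^{n}$ on $X^{k-1}$ form the symmetric difference of the $n$ configurations $w^{b^{i}}$, each with an odd number in each half; thus $g^{n}$ has, in each half, a number of active states congruent to $n$ modulo $2$. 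For even $n$ this is even, so $g^{n}\notin\texttt{T}$ and $g^{n}\notin\texttt{C}$ (for even $n$ the power may even reduce to the identity, which likewise violates the ``odd in each half'' requirement). This would prove the Lemma, and it is consistent with Lemma~\ref{about transposition}, which concerns odd-length products.

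The only genuinely delicate point is the geometric claim that conjugation by an element of $B_{k-1}$ respects the left/right splitting of $X^{k-1}$; I expect this to be the main obstacle, although it is not a serious one: it reduces to the fact that a tree automorphism acts on the pair $\{v_{1,1},v_{1,2}\}$ either trivially or by the transposition and then permutes the vertices of $X^{k-1}$ compatibly with that. Once this is granted, everything else is pure parity bookkeeping inside the elementary abelian group $W_{k-1}$, and the fact that $B_{k-1}$ contributes nothing on the level $X^{k-1}$.
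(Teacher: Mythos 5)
Your proof is correct and follows essentially the same route as the paper: a parity count of active states in the two halves $\{v_{k-1,j}: j\leq 2^{k-2}\}$ and $\{v_{k-1,j}: 2^{k-2}<j\leq 2^{k-1}\}$, noting that the upper-level action can at most interchange the two subtrees $v_{1,1}X^{[k-1]}$ and $v_{1,2}X^{[k-1]}$, so odd-plus-odd counts become even in each half. You merely make the paper's bookkeeping explicit via the decomposition $G_k\simeq B_{k-1}\ltimes W_{k-1}$ and the elementary abelian structure of $W_{k-1}$, which is a welcome tightening but not a different argument.
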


\begin{proof}
Let  $\varrho, \rho \in$ \texttt{T} (or \texttt{C}) and $\varrho, \rho = \eta$. The numbers of active states in vertices $v_{k-1, i}$, $1 \leq i \leq 2^{k-2}$ of $\varrho$ and $\rho$ sums by $mod 2$, numbers of active states in vertices on $v_{k-1, i}$, $1 \leq i \leq 2^{k-2}$ of $\varrho$ and $\rho$ sums by $mod 2$ too. Thus $\eta$ has even numbers of active states on these corteges.
 So $ RiS{{t}_{\left\langle \eta  \right\rangle }}(k-1)$ doesn't contain elements of type $\tau $ so $\eta \notin $\texttt{T}. Really if we raise the element ${{\beta }_{1,{{i}_{1}};2,{{i}_{2}};...;k-1,{{i}_{k-1}};\tau }}\in $\texttt{T} to even power or we evaluate a product of even number of multipliers from \texttt{C} corteges ${{\mu }_{0}}$ and ${{\mu }_{1}}$ permutes with whole subtrees ${{v}_{1,1}}{{X}^{[k-1]}}$ and ${{v}_{1,2}}{{X}^{[k-1]}}$, then we get an element $g$ with even indices of ${{X}^{k}}$ in $v_{1,1}{{X^{k-1}}}$ and $v_{1,2}{{X^{k-1}}}$. Thus $g \notin $\texttt{T}. Consequently elements of \texttt{C} do not form a group, and the set \texttt{T} as a subset of \texttt{C} is not closed too.
\end{proof}
We have to take into account that all elements from \texttt{T} have the same main property to consists of odd number of active states in vertices ${{v}_{k-1,j}}$ with index $j\le {{2}^{k-2}}$ and odd number in vertices with index $j:$ ${{2}^{k-2}}<j\le {{2}^{k-1}}$.

Let ${S_{\alpha }}=\left\langle {{\alpha }_{0}},\,{{\alpha }_{1}},...,{{\alpha }_{k-2}} \right\rangle $, then $S_{\alpha }^{'}=\left\langle {{\alpha }_{0}},{{\alpha }_{0;1,({i}_{11},{i}_{12});}},...,{{\alpha }_{{0};1,({{i}_{11},...});...,k-2,({i}_{k-2,1},...) }} \right\rangle $, $\left\langle {{S}_{\alpha }} \right\rangle =\left\langle S_{\alpha }^{'} \right\rangle =Aut{{X}^{[k-1]}}$, $rk\left( {{S}_{a}} \right)=k-1$ where $rk\left( S \right)$ is the rank of group which is equal to number generators of its least generating system $S$ \cite{Bog}.
Let $S_{\beta }={{S}_{\alpha }}\cup \tau_{i...j} $, $\tau_{i...j} \in$\texttt{T},
${{S}_{\beta }^{'}}=\left\langle
{{\beta }_{0}},{\beta }_{1,({1});\tau^x },..., {{\beta }_{k-2, (1);\tau^x }}, \tau \right\rangle $, $x \in \{0,1\}$, note if $x=0$ then ${\beta }_{{l,({1})};\tau^x} = {\beta }_l$.
   In $S_{\beta }^{'}$ can be used $\tau  $ instead of $\tau_{i...j}\in T $ because it is not in principle for proof so let $S_{\beta }=S_{\alpha }\cup \tau $ hence $\left\langle {{S}_{\beta }} \right\rangle ={{G}_{k}}$.

Let us assume that $ S_{\beta }^{'}$ does not contain $\tau$ and so had rank $k-1$.
 To express element of type \texttt{T} from ${{S}_{\beta }^{'}}$ we can use a word ${{\beta }_{i,\tau }}\beta _{i}^{-1}=\tau $ but if ${{\beta }_{i,\tau }}\in {{S}_{\beta }^{'}}$ then ${{\beta }_{{{i}}}}\notin S_{\beta }^{'}$ in contrary case rank of $S_{\beta }^{'}$ is $k$.
 So we can not express word ${{\beta }_{i,\tau }}\beta _{i}^{-1}\left| _{{X}^{[k-1]}} \right.=e$ then ${{\beta }_{i,\tau }}\beta _{i}^{-1}=\tau $
  so we have to find relation in restriction of group ${{G}_{k}}$ on ${{X}^{[k-1]}}$. Next lemma investigate existing  of such word.  We have to take in consideration that $ {{G}_{k}}\left| _{{{X}^{k-1}}} \right.={{B}_{k-1}}\simeq Aut{{X}^{[k-1]}} $.

\begin{lemma} \label{Lem About product elem of C}
An element of type \texttt{T} cannot be expressed by $ S_{\beta }^{'}\backslash \{ \tau \} $ with using a product, where are even number of the elements have combine type.
\end{lemma}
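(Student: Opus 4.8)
The plan is to extract one mod-$2$ invariant from the $W_{k-1}$-component of an element, using the decomposition $G_k\simeq B_{k-1}\ltimes W_{k-1}$ established above (with $W_{k-1}\lhd G_k$). Write every $g\in G_k$ uniquely as $g=w(g)\,b(g)$, where $w(g)\in W_{k-1}$ carries the vertex permutations on $X^{k-1}$ only (of even total index, by Proposition \ref{ordW}) and $b(g)\in B_{k-1}$ carries the vertex permutations on $X^0,\dots,X^{k-2}$. Partition the bottom level into the two half-corteges $\mu_0=\{v_{k-1,j}\colon j\le 2^{k-2}\}$ and $\mu_1=\{v_{k-1,j}\colon 2^{k-2}<j\le 2^{k-1}\}$, which are precisely the leaf-sets of $v_{1,1}X^{[k-1]}$ and $v_{1,2}X^{[k-1]}$, and define $\pi\colon W_{k-1}\to C_2$ by letting $\pi(w)$ be the index of $w$ on $\mu_0$ reduced modulo $2$. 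Since the total index of $w\in W_{k-1}$ is even, $\pi(w)$ also equals the index of $w$ on $\mu_1$ reduced modulo $2$; and since $W_{k-1}$ is elementary abelian under componentwise addition, $\pi$ is a homomorphism. By the definition of type \texttt{T}, every $\tau'\in\texttt{T}$ lies in $W_{k-1}$ (it is in $St_{Aut X^{[k]}}(k-1)$ with even index on $X^{k-1}$) and satisfies $\pi(\tau')=1$.

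The first point to check is that $\pi$ is invariant under conjugation by $B_{k-1}$. If $b\in B_{k-1}=Aut X^{[k-1]}$ then $b$, being an automorphism of the rooted binary tree $X^{[k-1]}$, either preserves both sets $\mu_0,\mu_1$ or interchanges them; hence the index of $bwb^{-1}$ on $\mu_0$ equals the index of $w$ on $\mu_0$, or the index of $w$ on $\mu_1$, and in either case $\pi(bwb^{-1})=\pi(w)$. Next I would record the shape of the generators available: $S_\beta'\setminus\{\tau\}=\{\beta_0,\beta_{1,(1);\tau^x},\dots,\beta_{k-2,(1);\tau^x}\}$ consists of the non-combined element $\beta_0=\alpha_0$, with $w(\beta_0)=e$, and (when $x=1$) the combined generators $\beta_{l,(1);\tau}=\tau'_l\alpha_l$, with $w(\beta_{l,(1);\tau})=\tau'_l\in\texttt{T}$; for $x=0$ there are no combined generators and the statement is vacuous. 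For a word $g_1g_2\cdots g_n$ in these generators a straightforward computation in the semidirect product gives $w(g_1\cdots g_n)=\prod_{i=1}^{n}(c_i\,w(g_i)\,c_i^{-1})$, the product taken inside $W_{k-1}$, where $c_i=b(g_1)\cdots b(g_{i-1})\in B_{k-1}$.

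Now apply $\pi$. By the homomorphism property and conjugation invariance, $\pi(w(g_1\cdots g_n))=\sum_{i=1}^{n}\pi(w(g_i))$, and $\pi(w(g_i))$ equals $0$ when $g_i$ is non-combined and $\pi(\tau'_l)=1$ when $g_i$ is combined. Hence $\pi(w(g_1\cdots g_n))$ equals the number of combined factors of the word, taken modulo $2$. If this number is even, then $\pi(w(g_1\cdots g_n))=0$, i.e.\ $w(g_1\cdots g_n)$ has even index on $\mu_0$ and on $\mu_1$. But an element of type \texttt{T} lies in $St_{Aut X^{[k]}}(k-1)$, so it coincides with its own $W_{k-1}$-component, and by definition it has odd index on $\mu_0$ and odd index on $\mu_1$; therefore the word cannot represent an element of type \texttt{T}. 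This is the assertion of the lemma, and it is also the precise specialization of Lemma \ref{about transposition} (and a refinement of Lemma \ref{About not closed set of element of type T}) needed for the set $S_\beta'\setminus\{\tau\}$.

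The one step that really needs care --- the only genuine obstacle --- is the conjugation invariance of $\pi$: one must be certain that no element of $B_{k-1}$ can break a half-cortege of $X^{k-1}$ into pieces of $\mu_0$ and $\mu_1$. This rests on the fact that $\mu_0$ and $\mu_1$ are exactly the leaf-sets of the two root subtrees of $X^{[k-1]}$, together with the observation that when the $W_{k-1}$-parts of a word are collected the conjugating elements $c_i$ all come from $B_{k-1}$, never from the whole group $G_k$. The remaining steps are bookkeeping in the semidirect product and the homomorphism property of $\pi$.
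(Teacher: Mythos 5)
Your argument is correct, and it reaches the conclusion by a genuinely different route than the paper. The paper works with the restriction of the word to $X^{[k-1]}$: since an element of type \texttt{T} is trivial there, the restricted word must be a consequence of the defining relations of $B_{k-1}\simeq\underbrace{C_2\wr\dots\wr C_2}_{k-1}$, and the paper argues that these relators have commutator form, hence zero logarithm in each generator, which forces an even number of occurrences of the combined letters; it then closes the argument with Lemma \ref{About not closed set of element of type T} (even products of elements of \texttt{T} or \texttt{C} leave \texttt{T}). You instead exploit the decomposition $G_k\simeq B_{k-1}\ltimes W_{k-1}$ directly: you collect the $W_{k-1}$-components of the word as $B_{k-1}$-conjugates, define the half-cortege parity $\pi\colon W_{k-1}\to C_2$, and prove it is invariant under conjugation by $B_{k-1}$ because a tree automorphism either preserves or swaps the leaf-sets $\mu_0,\mu_1$ of the two root subtrees, while elements of $W_{k-1}$ have matching parities on the two halves (Proposition \ref{ordW}). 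This makes the parity of the number of combined factors an explicitly computed invariant of the product, so the lemma follows without any appeal to normal closures of relators or to logarithms, and it also subsumes the counting done in Lemma \ref{About not closed set of element of type T}. What each approach buys: the paper's version ties the statement to the presentation of $B_{k-1}$, which it reuses in Theorem \ref{Th about general relation}, whereas yours is more self-contained, more rigorous at the delicate step (why evenness of combined letters survives arbitrary interleaving with the $\alpha$-type letters), and gives a slightly stronger statement, namely $\pi$ of the product equals the number of combined factors modulo $2$. Two small points to tidy up: a word may also contain inverses of generators, but since $\pi\bigl(w(g^{-1})\bigr)=\pi\bigl(w(g)\bigr)$ by conjugation invariance and $W_{k-1}$ being elementary abelian, the parity count is unaffected; and the case $x=0$ is not vacuous but is covered by the same computation, the number of combined factors then being zero.
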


\begin{proof}
In this case an element of type \texttt{T} can be expressed only without using elements from \texttt{C}. Note that element of type $\tau $ belongs to $Ri\text{S}{{\text{t}}_{Aut{{T}_{k}}}}(k-1)$. Let us show that automorphism $\alpha :\,\,\,\alpha \left| _{{{X}^{k-1}}} \right.\in ~Ri\text{S}{{\text{t}}_{G}}(k)$, i.e. that is trivial at restriction on ${X}^{[k-1]}$ and belongs to
\texttt{T} can not be expressed from $S_{\beta }^{'}$ without using even number of elements from \texttt{C} because word ${{\alpha }_{{{i}_{1}}}}{{\alpha }_{{{i}_{2}}}}...{{\alpha }_{{{i}_{n}}}}=e$ of $Aut{{T}_{k-1}}$ is word that may be reduced to relation in $\underbrace{{{S}_{2}}\wr ...\wr {{S}_{2}}}_{k-1}$. It known that in wreath product $\wr _{j=1}^{k}{{\mathcal{C}}_{2}}$ holds a relations $\beta _{i}^{2^m}=e$ and $[\beta _{i}^{-1}{{\beta }_{j}}{{\beta }_{i}},\sigma _{j}^{{}}]=e,\,\,i<j$, $\left\langle {{\beta }_{j}},{{\sigma }_{j}} \right\rangle \simeq {{G}_{j}}$, \cite{Sk, DrSku} or more specially $\left[ \beta _{m}^{i}{{\beta }_{{{i}_{n}}, \tau }}\beta_{m}^{-i},\,\beta _{m}^{j}{{\beta }_{{{i}_{k}}, \tau }}\beta _{m}^{-j} \right]=e,\,\,\,i\ne j$, where $n,k,m$ are number of group in $\wr _{j=1}^{k}{\mathcal{C}}_{2}$ ($m<n$, $m<k$) \cite{Sk}, ${\beta }_{{{i}_{n}}}$, ${{\beta }_{m}},\,\,{{\beta }_{{{i}_{k}}}}$ are generators from ${{S}_{^{\beta }}}$.
As we see such relations $r_i$ has structure of commutator so logarithm \cite{K} of every $r_i$ by every generator $\beta_i \in S_\beta$ is zero because every exponent entering with different signs so every $r_i$ has logarithm 0 by every element from \texttt{C} hence is not equal to element of T because Lemma \ref{About not closed set of element of type T}.
 Really product of even number of
  arbitrary elements of \texttt{T} according to Lemma \ref{About not closed set of element of type T} (analogously  \texttt{C}) isn't element of \texttt{T}.
So other way to express automorphism $\alpha :\,\,\,\alpha \left| _{{{X}^{k-1}}} \right.\in ~Ri\text{S}{{\text{t}}_{{{B}_{k-1}}}}(k-1)$ doesn't exist according to relations in ${{G}_{k}}$. Cyclic relation in this group has form $\beta _{{{i}_{0}};\tau }^{{{2^x}}}=e$, analogously for $\beta _{1,{{i}_{1}};2,{{i}_{2}};...;k-1,{{i}_{k-1}};\tau }^{2^y}=e$ because it is 2-group, so we need to raise generator to even power.
So another way to express automorphism $\alpha :\,\,\,\alpha \left| _{{{X}^{k-1}}} \right.\in ~Ri\text{S}{{\text{t}}_{{{B}_{k-1}}}}(k-1)$ does not exist according to relations in ${{G}_{k}}$.
\end{proof}

Let us assume existence of generating system of rank $k-1$ for $Syl_2(A_{2^k})$ that in general case has form $S_{\beta }^{*}(k-1)=\left\langle
 \mathop {\beta}_0, \mathop {\beta}_{0;1,(i_{11},i_{12}); \pi_1 } , ... ,  \beta_{0;...;i,(i_{i1},...,i_{ij});\pi_i } , ... ,  \beta_{0;...;k,(i_{k-11},...,i_{k-1j}) };\pi_k 
   \right\rangle, \ 0<i<k-1, j \leq 2^{i}, \pi_1 \in T,  $, where $\pi_i$ is the cortege of vertices from $X^{k-1}$ with non trivial states which realize permutation with distance $2(k-1)$. In other word if element $\pi_i \in$ \texttt{T} then $\beta_{0;...;i,(i_{i1},...,i_{ij}); \pi_i } = \pi_i \alpha_{0;...;i,(i_{i1},...,i_{ij}) }$.
    From it follows $\mathop {\beta}_{0;1,(i_{11},i_{12}); ... ;{m},(i_{m1},...,i_{mj};\pi_m) }\mid_{X^{[k-1]}} = \mathop {\alpha}_{0;1,(i_{11},i_{12}); ... ,{m},(i_{m1},...,i_{mj}) } \in S^{'}_{\alpha}$.

Note, that automorphisms from system $S^{*}_{\beta}(k-1)$ generate on truncated rooted tree \cite{Ne} $X^{[k-1]}$ group $\langle S^{*}_{\beta}(k-1) \rangle \mid_{X^{[k-1]}} \simeq \wr^{k-1}_{i=1} C_2^{(i)}\simeq B_{k}$.
\begin{theorem} \label{Th about general relation}
Any element of type \texttt{T} can not be expressed by elements of $S^{*}_{\beta}(k-1)$.
\end{theorem}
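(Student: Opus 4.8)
The plan is to argue by contradiction and to push the supposed relation down to level $X^{k-1}$, where the group degenerates to an iterated wreath product whose relators are controlled by a parity (``logarithm'') invariant. Assume some $\tau'\in\texttt{T}$ equals a word $w=w(\beta_0,\dots,\beta_{k-2})$ in the generators of $S^{*}_{\beta}(k-1)$. Every element of \texttt{T} lies in $St_{Aut X^{k}}(k-1)$ and is trivial on all levels below $X^{k-1}$, so $\tau'|_{X^{[k-1]}}=e$, and hence $w|_{X^{[k-1]}}=e$ in $\langle S^{*}_{\beta}(k-1)\rangle|_{X^{[k-1]}}\simeq\underbrace{C_2\wr\dots\wr C_2}_{k-1}\simeq B_{k-1}$, where $\beta_i|_{X^{[k-1]}}=\bar\beta_i=\alpha_{0;\dots;i,(\dots)}\in S'_{\alpha}$. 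Thus, read in the free group on $\bar\beta_0,\dots,\bar\beta_{k-2}$, the word $w$ is a relator of $B_{k-1}$.

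First I would determine the parity of the exponent sums of an arbitrary relator. The set $\bar\beta_0,\dots,\bar\beta_{k-2}$ is a $(k-1)$-element generating system of the $2$-group $B_{k-1}$, and the minimal number of generators of $B_{k-1}$ is $k-1$, so $B_{k-1}/\Phi(B_{k-1})\cong(C_2)^{k-1}$ and the images of $\bar\beta_0,\dots,\bar\beta_{k-2}$ form a basis of it. Consequently the quotient map $B_{k-1}\to(C_2)^{k-1}$ sending $\bar\beta_i$ to the $i$-th basis vector, precomposed with the projection of the free group, is exactly $w\mapsto(e_0(w),\dots,e_{k-2}(w))\bmod 2$, where $e_i(w)$ denotes the exponent sum of $\beta_i$ in $w$; since $w$ is a relator this vector is $0$, i.e. every $e_i(w)$ is even. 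Equivalently, the logarithm of $w$ with respect to each $\beta_i$ (in the sense of \cite{K}) vanishes, which is also what one reads off from the fact that every relation of $\wr_{j=1}^{k-1}C_2$ in these generators is a product of conjugates of the relators $\beta_i^{2^{m_i}}=e$ (even powers) and the commutators $[\beta_i^{-1}\beta_j\beta_i,\sigma_j]=e$, each of which has exponent sum $\equiv 0\pmod 2$ in every generator (cf. Lemma~\ref{Lem About product elem of C}).

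Next I would transport this parity information up to $G_k$. Write each generator as $\beta_i=\pi_i\bar\beta_i$ with $\bar\beta_i\in B_{k-1}$ and $\pi_i\in W_{k-1}$, where $\pi_0=e$ and $\pi_i$ has active states only on $X^{k-1}$ for $i\ge1$. In $G_k\simeq B_{k-1}\ltimes W_{k-1}$ the $B_{k-1}$-component of $w$ is trivial (because $w|_{X^{[k-1]}}=e$), so $w\in W_{k-1}$ and $w$ is a product of $B_{k-1}$-conjugates of the letters $\pi_{i_j}^{\pm1}$. Let $p_L\colon W_{k-1}\to C_2$ send an element to the parity of the number of its active states among the vertices $v_{k-1,j}$ with $j\le 2^{k-2}$; this is a linear functional on the elementary abelian group $W_{k-1}$, hence a homomorphism, and it is invariant under conjugation by $B_{k-1}$, since every element of $B_{k-1}$ either fixes or interchanges the two halves of $X^{k-1}$ while every element of $W_{k-1}$ carries an even total number of active states on $X^{k-1}$ (Proposition~\ref{ordW}). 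Therefore $p_L(w)=\sum_{i\ge1}p_L(\pi_i)\,e_i(w)\bmod 2=0$ by the previous paragraph. But inside $W_{k-1}$, being of type \texttt{T} means exactly having an odd number of active states among $v_{k-1,j}$, $j\le 2^{k-2}$, i.e. $p_L=1$; so $p_L(\tau')=1$, a contradiction. One may also run this last step entirely in the language of \texttt{C} and \texttt{T}: $w$ uses, modulo $2$, an even number of letters from each combined generator, so Lemma~\ref{Lem About product elem of C} together with Lemma~\ref{About not closed set of element of type T} already forces $w\notin\texttt{T}$.

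The crux — and the step that needs the most care — is the middle paragraph: proving that \emph{every} relator of $B_{k-1}$, written in the nonstandard generators $\bar\beta_0,\dots,\bar\beta_{k-2}$ rather than in $\alpha_0,\dots,\alpha_{k-2}$, has all exponent sums even. This rests on the (implicit) fact that any $(k-1)$-element generating set of the $2$-group $B_{k-1}$ projects to a basis of its Frattini quotient $(C_2)^{k-1}$, so that descending from the free group to $B_{k-1}$ cannot change the $\bmod\,2$ exponent-sum vector of a word. Everything else — the decomposition $G_k\simeq B_{k-1}\ltimes W_{k-1}$, the $B_{k-1}$-invariance of $p_L$, and the identification $\texttt{T}\cap W_{k-1}=\{w\in W_{k-1}: p_L(w)=1\}$ — is routine given the results already established.
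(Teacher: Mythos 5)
Your proof is correct and follows the same overall strategy as the paper's: restrict the hypothetical word $w$ with $w=\tau'\in\texttt{T}$ to $X^{[k-1]}$, observe that it becomes a relator of $B_{k-1}$ in the restricted generators, deduce that every generator occurs with even exponent sum, and conclude that the resulting element has an even number of active states in each half of $X^{k-1}$, hence is not of type \texttt{T}. The difference lies in how the two key steps are justified. For the parity of exponent sums the paper argues through an explicit presentation of the iterated wreath product (power relators $\alpha_i^{2^m}=e$ and commutator-type relators, with every relator lying in their normal closure and having ``logarithm'' zero), while you obtain the same conclusion more cleanly from the Burnside basis theorem: a generating set of the $2$-group $B_{k-1}$ of size $k-1=\mathrm{rk}(B_{k-1})$ projects to a basis of the Frattini quotient $(C_2)^{k-1}$, so a relator must have all exponent sums even regardless of which presentation one writes down. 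For the lift back to $G_k$ the paper appeals to Lemma \ref{About not closed set of element of type T} and Lemma \ref{Lem About product elem of C} (products of an even number of elements of \texttt{C} or \texttt{T} leave \texttt{T}), and then treats separately the case of two generators with equal restrictions; you instead use the decomposition $G_k\simeq B_{k-1}\ltimes W_{k-1}$ together with the explicit $B_{k-1}$-invariant parity homomorphism $p_L\colon W_{k-1}\to C_2$ (invariance coming from the fact that $B_{k-1}$ preserves or swaps the two halves of $X^{k-1}$ while elements of $W_{k-1}$ have even total index, Proposition \ref{ordW}), which handles all cases uniformly and makes the paper's concluding case analysis unnecessary. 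What your route buys is self-containedness and a sharper bookkeeping device ($p_L$); what the paper's route buys is an argument phrased directly in terms of its types \texttt{T} and \texttt{C} and the known wreath-product relations, at the price of relying on those presentation facts and an extra case distinction.
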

\begin{proof}
It is necessary to express an automorphism $ \theta$ of type \texttt{T} express such automorphism which has zero indexes of $X^0, ... , X^{k-2}$, this conclusion follows from structure of elements from \texttt{T}. It means that word $w$ from letters of $S^{*}_{\beta}(k-1)$ such that $w = \theta$ is trivial in group $B_k$, that arise on restriction of $\langle S^{*}_{\beta}(k-1)\rangle$ on $X^{[k-1]}$, as well restriction $ G_k \mid_{X^{[k-1]}}\simeq B_k$.

 Every relation from $B_{k}$ can be expressed as a product of words from the normal closure $R^{B_{k}}$ of the set of constitutive relations of the group $B_{k}$ \cite{Bog}.
    But defined relations $r_i$ of $B_{k}$ have form of commutators \cite{DrSku, Sk}
  so the number of inclusions of every multiplier is even and as follows from lemma \ref{About not closed set of element of type T}
that $r_i \mid_{[X^k]} \notin$ \texttt{T}.
             Really in wreath product $\wr _{j=1}^{k}{{\mathcal{C}}^{(j)}_{2}}\simeq B_{k-1}$ holds a constitutive relations
      $\alpha_{i}^{2^m}=e$ and
  $\left[ \alpha _{m}^{i}{{\alpha }_{{{i}_{n}} }}\alpha_{m}^{-i},\,\alpha_{m}^{j}{\alpha }_{{{i}_{k}} }\alpha_{m}^{-j} \right]=e,\,\,\,i\ne j$, where $n,k,m$ are number of groups in $\wr _{j=1}^{k}{{\mathcal{C}}^{(j)}_{l}}$ ($m<n$, $m<k$) \cite{Sk, DrSku}, where
 ${\alpha }_{{{i}_{n}}}$, ${\alpha }_{{{i}_{n}}}$, ${{\alpha }_{m}},\,\,{{\alpha }_{{{i}_{k}}}}$ are generators of the $B_{k-1}$ from ${{S}^{'}_{^{\alpha }}}$.
    So it give us a word $\left[ \beta _{m}^{i}{{\beta }_{{{i}_{n}},{\pi} }}\beta_{m}^{-i},\,\beta _{m}^{j}{\beta }_{{{i}_{k}}, {\pi} }\beta_{m}^{-j} \right],\,\,\,i\ne j$ that could be an automorphism $ \theta$ but does not belongs to \texttt{T} because the word has structure of commutator or belongs to normal closure $R^{B_{k-1}}$ so has logarithm 0 by every element, where
   ${\beta }_{{{i}_{n}}}$, ${\beta_m},\,\,{{\beta }_{{{i}_{k}}}},\, {{\beta }_{{{i}_{n}}, \pi }}$ are generators of $G_k$ from
    $S^{*}_{\beta}(k-1)$.

    Let ${{\beta}_{1,1,(i_{11},i_{12}), \ldots ,i,(i_{i1},...,i_{i2^i});\pi_i}}$ is arbitrary element of type \texttt{C}, where $\pi_i$ -- cortege of vertexes from $X^{k-1}$ with non trivial states which realize permutation with distance $2(k-1)$.

 The case where $\theta = \beta_{{k-1}; {\pi_i}} \in$ \texttt{T} can be expressed by multiplying arbitrary ${{\beta}_{1,(i_{11,12}), \ldots ,i,(i_{i1},...,i_{i2^i});\pi_i}}$ on $\beta_{1,(i_{11,12}), \ldots ,i,(i_{i1},...,i_{i2^i})}^{-1}$ means that such system has rank more then $k-1$ what is contradiction.
   Really if ${{\beta}_{1,(i_{11,12}), \ldots ,i,(i_{i1},...,i_{i2^i})}}$ and ${{\beta}_{1,(i_{11,12}), \ldots ,i,(i_{i1},...,i_{i2^i});\pi_i}} \,  \in S_{\beta}^{*}$ then ${\beta}_{1,(i_{11,12}), \ldots ,i,(i_{i1},...,i_{i2^i})}  |_{X^{[k-1]}} = {{\beta}_{1,(i_{11,12}), \ldots ,i,(i_{i1},...,i_{i2^i});\pi_i}}|_{X^{[k-1]}} $ i.e. these elements are mutually inverse at restriction on $X^{[k-1]}$,
    but it means that in restriction $S^{*}_{\beta}(k-1)$ on $X^{[k-1]}$ that corresponds to the generating system $S^{'}_{\alpha}(k-1)$ for $AutX^{[k-1]} \simeq B_{k-1} $ we use two equal generators. So it has at least $k$ generators,
    because rank of $Aut X^{[k-1]}$ is equal to $k-1$ according to lemma \ref{B_k-1}.

The subcase of this case where ${{\beta}^{-1}_{1,(i_{11},i_{12}), \ldots ,l,(i_{l1},...,i_{l2^i})}}$ 
 can be expressed from $S^{*}_{\beta}(k-1)$ as a product of its generators has the same conclusion. Really if we can generate arbitrary element from $B_{k}$ by generators from $S^{*}_{\beta}(k-1)$ then $k-1$ generators is contained in system $S^{*}_{\beta}(k-1)$ but we have a further ${{\beta}_{1,(i_{11},i_{12}), \ldots ,i,(i_{i1},...,i_{i2^i}); {\pi}}}$. In other words if arbitrary element ${{\beta}_{1,(i_{11},i_{12}), \ldots ,l,(i_{l1},...,i_{l2^i})}}$ of the $B_{k-1}$ does not contains in $S^{*}_{\beta}(k-1)$ but can be expressed from it then $S^{*}_{\beta}(k-1)$ has at least $k-1$ elements exclusive of ${{\beta}_{1,(i_{11},i_{12}), \ldots ,i,(i_{i1},...,i_{i2^i}); {\pi}}}$  then its rank at least $k$.
\end{proof}
\begin{corollary}\label{generating pair} A necessary and sufficient condition of expressing an element  $\tau $ from $S_{\beta }^{'}$  is existing of pair:  ${{\beta }_{{{i}_{m}};\tau }},\,\,\,\,{{\beta }_{{{i}_{_{m}}}}}$ in $\langle S_{\beta}^{'} \rangle$. So rank of a generating system of $G_k$ which contains $S_{\beta }^{'}$ is at least $k$.
\end{corollary}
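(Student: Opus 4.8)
The plan is to obtain this corollary by synthesising Lemma~\ref{about transposition}, Lemma~\ref{Lem About product elem of C} and Theorem~\ref{Th about general relation}, and then to read off the rank bound from a count of distinct restrictions to $X^{[k-1]}$, using that $G_k|_{X^{[k-1]}}\simeq B_{k-1}\simeq AutX^{[k-1]}$ has rank $k-1$.

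For the sufficiency I would argue directly. If $S_{\beta}^{'}$ contains a combined generator $\beta_{i_m;\tau}\in\texttt{C}$ together with its pure partner $\beta_{i_m}$, i.e. the automorphism with $\beta_{i_m}|_{X^{[k-1]}}=\beta_{i_m;\tau}|_{X^{[k-1]}}$ and trivial rigid stabiliser at level $k-1$, then $\beta_{i_m;\tau}\beta_{i_m}^{-1}$ acts trivially on $X^{[k-1]}$ and is supported on $X^{k-1}$, so it lies in $W_{k-1}$ and in fact carries exactly the parity pattern of the type-\texttt{T} element in $Rist_{\langle\beta_{i_m;\tau}\rangle}(k-1)$; hence it is an element of type \texttt{T}, which is what ``expressing $\tau$'' means here. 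This is the easy half.

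For the necessity, suppose a word $w$ over $S_{\beta}^{'}$ represents $\tau$ (or any element of type \texttt{T}). By Lemma~\ref{about transposition} the number of letters of $w$ lying in $\texttt{C}\cup\texttt{T}$ is odd. Restricting $w$ to $X^{[k-1]}$, where the target is trivial, turns $w|_{X^{[k-1]}}$ into a relator of $B_{k-1}\simeq AutX^{[k-1]}$; by the commutator form of the defining relators of $\wr_{j=1}^{k-1}C_2$ recalled in Lemma~\ref{Lem About product elem of C} and Theorem~\ref{Th about general relation}, such a relator lies in the normal closure of those commutators and has logarithm zero with respect to each generator. Consequently every combined generator of $S_{\beta}^{'}$ appears in $w$ an even number of times; if $S_{\beta}^{'}$ contained no pure partner of any of its combined generators, the letters of $w$ from $\texttt{C}\cup\texttt{T}$ would consist only of these combined generators (and possibly $\tau$ itself), and, since by Lemma~\ref{About not closed set of element of type T} an even product of \texttt{C}- or \texttt{T}-elements is never of type \texttt{T}, their total count would be even, contradicting Lemma~\ref{about transposition}. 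Hence $S_{\beta}^{'}$ must contain the pair $\beta_{i_m;\tau},\beta_{i_m}$, which conversely (by the first part) suffices; this is the asserted necessary and sufficient condition.

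For the rank bound I would note that restriction to $X^{[k-1]}$ maps $S_{\beta}^{'}$ onto a generating set of $B_{k-1}$, a group of rank $k-1$ (its minimal generating set $S_{\alpha}^{'}$ has $k-1$ elements, \cite{Gr}). The pair $\beta_{i_m;\tau},\beta_{i_m}$ forced by the previous step has coinciding restrictions to $X^{[k-1]}$, so if $|S_{\beta}^{'}|$ were only $k-1$ the restrictions of its members would be at most $k-2$ distinct automorphisms, too few to generate $B_{k-1}$; this contradiction shows $|S_{\beta}^{'}|\ge k$. The main obstacle is precisely the necessity step --- excluding every word in $S_{\beta}^{'}$ that might represent $\tau$ through a subtler cancellation among combined generators --- and it is exactly there that the preceding machinery (the commutator shape of the relators of $B_{k-1}$, the vanishing of their logarithm by every generator, and the non-closure of \texttt{T} and \texttt{C} under even products) does the real work, leaving for the corollary mainly the parity bookkeeping.
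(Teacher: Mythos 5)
Your proposal is correct and follows essentially the same route as the paper: the sufficiency is exactly the paper's formula $\beta_{i_m;\tau}\beta_{i_m}^{-1}=\tau$, and the necessity is the same combination of Lemma \ref{about transposition}, the commutator/logarithm-zero shape of the relators from Lemma \ref{Lem About product elem of C} and Theorem \ref{Th about general relation}, and the non-closure Lemma \ref{About not closed set of element of type T}, with the rank bound read off from coinciding restrictions to $X^{[k-1]}$ just as in the paper's Theorem \ref{Th about general relation}. Your write-up merely makes the paper's terse parity bookkeeping explicit, which is a faithful (indeed more detailed) rendering of the intended argument.
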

\begin{proof} Proof can be obtained from Lemma \ref{Lem About product elem of C} and
Lemma \ref{Th about general relation} from which we have that element of type \texttt{T} cannot be expressed from $\left[ \beta _{m}^{i}{{\beta }_{{{i}_{n}}.\tau }}\beta _{m}^{-i},\,\beta _{m}^{j}{{\beta }_{{{i}_{k}}.\tau }}\beta _{m}^{-j} \right]=e,\,\,\,i\ne j$ because such word has even number of elements from \texttt{C}. Sufficient condition follows from  formula ${{\beta }_{{{i}_{m}};\tau }}\beta _{{{i}_{_{m}}}}^{-1}=\tau $.
\end{proof}
\begin{lemma} \label{rk} A generating system of  ${{G}_{k}}$ contains $S_{\alpha}^{'}$ and has at least $k-1$ generators.
\end{lemma}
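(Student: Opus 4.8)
The plan is to project the whole question onto the truncated tree $X^{[k-1]}$ and read off the bound from the known rank of the iterated wreath product. First I would observe that the section map $\rho\colon G_k\to G_k|_{X^{k-1}}$, $g\mapsto g|_{X^{[k-1]}}$, is a group epimorphism whose image is, by the remark preceding the lemma, $B_{k-1}\simeq\mathrm{Aut}\,X^{[k-1]}=\langle\alpha_0,\dots,\alpha_{k-2}\rangle$. Hence for any generating system $S$ of $G_k$ the set $\rho(S)$ generates $\mathrm{Aut}\,X^{[k-1]}$, and since the image of a generating set generates the image, $|S|\ge|\rho(S)|\ge\mathrm{rk}\bigl(\mathrm{Aut}\,X^{[k-1]}\bigr)=k-1$ by the result cited from \cite{Gr} (witnessed by the minimal systems $S_\alpha$ and $S_\alpha'$). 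This already gives the stated lower bound on the number of generators.

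For the refinement that such a system contains $S_\alpha'$ on the restriction to $X^{[k-1]}$, I would use that $\mathrm{Aut}\,X^{[k-1]}$ is a finite $2$-group, so by the Burnside basis theorem \cite{Bog} a generating set of it of minimal size maps onto an $\mathbb F_2$-basis of the Frattini quotient $\mathrm{Aut}\,X^{[k-1]}/\Phi\cong\mathbb F_2^{\,k-1}$. The chain of level stabilizers $\mathrm{St}(0)>\mathrm{St}(1)>\dots>\mathrm{St}(k-2)$ induces in this quotient a complete flag $V_0\supset V_1\supset\dots\supset V_{k-2}\supset 0$, and the elements of $S_\alpha'$ are built precisely so that, level by level, $\alpha_{0;1,(\dots);\dots;\ell,(\dots)}$ projects onto a generator of $V_\ell/V_{\ell+1}$. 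Given a minimal generating system of $\mathrm{Aut}\,X^{[k-1]}$, I would triangularize its image with respect to this flag using Nielsen transformations (replacing a generator by its product with another), which act on the Frattini quotient exactly as elementary operations on the basis, and then lift the triangulation back to the group. The outcome is a generating system whose restriction to $X^{[k-1]}$ coincides with $S_\alpha'$, i.e.\ any minimal generating system of $G_k$ can be taken of the form $S_\beta^{*}(k-1)$; this is the form needed to invoke Corollary \ref{generating pair} afterwards.

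The main obstacle is this triangularization-and-lifting step: one must check that every minimal generating set of $\mathrm{Aut}\,X^{[k-1]}$ is Nielsen-equivalent to the particular triangular set $S_\alpha'$, and not merely of cardinality $k-1$. This requires controlling how the generators interact with the entire chain of level stabilizers — equivalently, with the associated graded of the Frattini (lower $2$-central) filtration of $B_{k-1}=\underbrace{C_2\wr\cdots\wr C_2}_{k-1}$ — whereas the lower bound itself is the soft statement that a surjection cannot increase the rank.
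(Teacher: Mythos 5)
Your first paragraph is, in substance, the paper's own proof. The paper argues via $W_{k-1}\lhd G_k$ and $G_k/W_{k-1}\simeq B_{k-1}\simeq \mathrm{Aut}\,X^{[k-1]}$, whose minimal generating systems have $k-1$ elements by the cited reference, so any generating set of $G_k$ needs at least $k-1$ elements; your restriction epimorphism $\rho$ is precisely this quotient map (its kernel is $W_{k-1}$), and your formulation that a surjection cannot increase the rank is the correct version of the inequality that the paper states somewhat loosely as $\mathrm{rk}(G)\ge \mathrm{rk}(H)$ for $H\lhd G$. So for the stated lower bound you are correct and follow the same route as the paper.

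The second half of your proposal, however, attempts something the paper neither proves nor needs: that an arbitrary minimal generating system of $G_k$ can be modified by Nielsen transformations so that its restriction to $X^{[k-1]}$ is literally $S_\alpha'$. The paper's proof of Lemma \ref{rk} consists only of the rank bound, and the subsequent minimality argument (Lemma \ref{Lem About product elem of C}, Theorem \ref{Th about general relation}, and Corollary \ref{qoutient} via the Frattini quotient) never requires reducing a general system to this triangular form. Moreover, as you flag yourself, the triangularization-and-lifting step is a genuine gap: transitivity of Nielsen moves on bases of $B_{k-1}/\Phi(B_{k-1})\cong \mathbb{F}_2^{\,k-1}$ does not imply that two minimal generating tuples of $B_{k-1}$ inducing the same basis are Nielsen equivalent, and still less that a lift back to $G_k$ can be chosen whose restriction coincides with $S_\alpha'$ elementwise; this is a delicate product-replacement type question that neither you nor the paper settles. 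If you confine the proof to your first paragraph it matches the paper; the refinement should either be dropped or supported by a separate nontrivial argument.
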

\begin{proof} The subgroup ${{B}_{k-1}}<{{G}_{k}}$ is isomorphic to $Aut{{X}^{k-1}}$ that has a  minimal system of generators  of $k-1$ elements \cite {Gr}. Moreover, the subgroup ${{B}_{k-1}} \simeq {}^{{{G}_{k}}}/{}_{{{W}_{k-1}}}$, because $G_{k} \simeq B_{k-1}\ltimes {W}_{k-1}  $, where ${W}_{k-1} \vartriangleright G_k$. As it is well known that if $H\lhd G$ then $\text{rk}(G)\ge \text{rk}(H)$, because all generators of $G_{k}$ may belongs to different quotient classes \cite{Magn}.
\end{proof}
As a corollary of last lemma and lemma \ref{Lem About product elem of C} we see that generating system of rank $k-1$ does not exist.
\begin{statment} \label{comm}
Frattini subgroup $ \phi(G_k)= {{G_k}^{2}}\cdot [G_k,G_k]= {{G_k}^{2}} $ acts by all even permutations on ${{X}^{l}},\,\,\,0\le l<k-1$
and by all even permutations on $ X^{k}$ except for those  from \texttt{T}.
\end{statment}
\begin{proof}
Index of the automorphism $\alpha^2 $, $\alpha  \in {{S}_{\beta }}$ on $X^l$ is always even. Really the parity of the number of vertex permutations at $X^l$ in the product $({\alpha }_i {\alpha }_j)^2$, ${\alpha }_i, {\alpha }_j \in  S_{\alpha }$, $i,j<k$ is determined exceptionally by the parity of the numbers of active states at ${{X}^{l}}$ in $\alpha $ and $\beta $ (independently of the action of v.p. from the higher levels). On $X^{k-1}$ group $G^2$ contains all automorphisms of form $\tau_{1 i},  i\leq 2^{k-1}$ which can be generated in such way $({\alpha }_{k-2} \tau_{12})^2= \tau_{1234}$, $\tau_{12}\tau_{1234} = \tau_{34}$,  $({\alpha }_{k-i} \tau_{12})^2= \tau_{1, 2, 1+2^{k-i}, 2+2^{k-i}}$ then $\tau_{1, 2, 1+2^{k-i}, 2+2^{k-i}} \tau_{12} =\tau_{1+2^{k-i}, 2+2^{k-i}}$. In such way we get set of form ${\tau_{12}, \tau_{23},, \tau_{34}, ... ,\tau_{2^{k-1}-1,2^{k-1}}}$. This set is the base for $W_{k-1}$.

The parity of the number of vertex permutations at $X^l$ in the product ${\alpha }_i$ or ${\alpha }_i {\alpha }_j$, ${\alpha }_i,{\alpha }_j \in  S_{\alpha }$) is determined exceptionally by the parity of the numbers of active states at ${{X}^{l}}$ in $\alpha $ and $\beta $ (independently of the action of v.p. from the higher levels). Thus $[\alpha ,\beta ]=\alpha \beta {\alpha }^{-1}{\beta }^{-1}$ has an even number of v. p. at each level. Therefore, the commutators of the generators from ${{S}_{\alpha }}$ and elements from $G^2$ generate only the permutations with even number of v. p. at each ${{X}^{l}}$, ($0\le l\le k-2$).

Let us consider ${{\left( {{\alpha }_{0}}{{\alpha }_{l}} \right)}^{2}}={{\beta }_{l({{1,2}^{l-1}}+1)}}$. Conjugation by the element ${{\beta }_{1(1,2)}}$ (or ${{\beta }_{i(1,2)}},\,\,0<i<l$)  give us ability to express arbitrary coordinate $x:\,\,1\le x \leq {{2}^{l-1}}$ where $x=2^{k-1}-i$,
 i.e. from the element ${{\beta }_{l({{1,2}^{l-1}}+1)}}$ we can express ${{\beta }_{l(x{{,2}^{l-1}}+1)}}$. For instance $x={{2}^{j-1}}+1$, $j<l$: ${{\beta }_{l-j(1,2)}}{{\beta }_{l({{1,2}^{l-1}}+1)}}{{\beta }_{l-j(1,2)}}={{\beta }_{l({{2}^{j-1}+1},{{2}^{l-1}}+1)}}$. If $x={{2}^{l-j}}+2$ than to  realize every shift on $x$ on set $X^l$ 
the element
${{\beta }_{l({{1,2}^{l-1}}+1)}}$ should to be conjugated by such elements ${{\beta }_{l-j(1,2)}} {{\beta }_{l-1(1,2)}}$. So in such way can be realized every ${{\beta }_{l(x{{,2}^{l-1}}+1)}}$ and analogously every  ${{\beta }_{l({{2}^{l-1},y})}}$ and ${{\beta }_{l(x,y)}}$. Hence we can express from elements of $G^2$ every even number of active states on $X^l$.
\end{proof}



Define the subgroup $G(l)<Aut{{X}^{[k]}}$, where $l\le k$, as $Stab_{Aut{{X}^{[k]}}}(l)\left| _{{{X}^{l}}} \right.$.
Let us construct a homomorphism from $G(l)$ to ${{C}_{2}}$ in the following way: $\varphi (\alpha )=\sum\limits_{i=1}^{{{2}^{l}}}{{{s}_{li}}(\alpha )}\bmod 2$. Note that $\varphi (\alpha \cdot \beta )=\varphi (\alpha )\circ \varphi (\beta )=(\sum\limits_{i=1}^{{{2}^{l}}}{{{s}_{li}}(\alpha )}+\sum\limits_{i=1}^{{{2}^{l}}}{{{s}_{li}}(\beta )})mod2$.

Structure of subgroup $G_{k}^{2}{{G}_{k}}'\triangleleft \underset{1}{\overset{k}{\mathop{\wr }}}\,{{S}_{2}}\simeq Aut{{X}^{[k]}}$ can be described in next way. This subgroup contains the commutant -- ${G}_{k}'$ so it has on each ${{X}^{l}},\,\,\,0\le l<k-1$ all even indexes that can exists there. On ${{X}^{k-1}}$ it does not exist v.p. of type \texttt{T}, which has the distance $2k-2$, rest of even the indexes are present on ${X}^{k-1}$. It's so, because the sets of elements of types \texttt{T} and \texttt{C} are not closed under operation of calculating the even power as it proved in Lemma \ref{About not closed set of element of type T}.

Thus, the squares of the elements don't belong to \texttt{T} and \texttt{C} (because they have the distance, which is less than $2k-2$).
This implies the following corollary.

\begin{corollary} \label{qoutient} A quotient group ${}^{{{G}_{k}}}/{}_{G_{k}^{2}{{G}^{'}_{k}}}$ is isomorphic to $\underbrace{{{C}_{2}}\times {{C}_{2}}\times ...\times {{C}_{2}}}_{k}$.
\end{corollary}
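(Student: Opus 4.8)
The plan is to realise the quotient as the image of an explicit surjective homomorphism $\Phi\colon G_k\to\underbrace{C_2\times\cdots\times C_2}_{k}$ whose kernel is exactly $G_k^{2}G_k'$, using the level‑parity maps introduced just before the statement together with Statement \ref{comm}.

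First I would assemble the components of $\Phi$. For $0\le l\le k-2$ take $\varphi_l(g)=\sum_{i=1}^{2^{l}}s_{l,i}(g)\bmod 2$, the parity of the number of active states of $g$ on $X^{l}$; these are homomorphisms $G_k\to C_2$ by the additivity $\varphi(\alpha\beta)=\varphi(\alpha)+\varphi(\beta)$ recorded above (one sums over all of $X^{l}$, and $g$ merely permutes the vertices of a fixed level). To these $k-1$ maps I would adjoin one more, $L(g)=\sum_{i=1}^{2^{k-2}}s_{k-1,i}(g)\bmod 2$, the parity of the number of active states of $g$ among the ``left half'' $v_{k-1,1},\dots,v_{k-1,2^{k-2}}$ of $X^{k-1}$. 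The non‑obvious point is that $L$ is a homomorphism: from $s_{k-1,i}(gh)=s_{k-1,h(i)}(g)+s_{k-1,i}(h)\bmod 2$ (where $h(i)$ is the index of $h(v_{k-1,i})$) one gets, summing over $i\le 2^{k-2}$, either $L(gh)=L(h)+L(g)$ or $L(gh)=L(h)+R(g)$ according as $h$ preserves or interchanges the two halves of $X^{k-1}$, where $R(g)=\sum_{i>2^{k-2}}s_{k-1,i}(g)\bmod 2$; but by Lemma \ref{even} every element of $G_k$ has an even number of active states on $X^{k-1}$, so $L(g)=R(g)$ and $L$ is additive. Put $\Phi=(\varphi_0,\dots,\varphi_{k-2},L)$.

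Next I would verify surjectivity of $\Phi$ on the generating set $S_{\alpha}=\{\alpha_0,\dots,\alpha_{k-2},\tau\}$ of Theorem \ref{isomor}. Since $\alpha_l$ has a single active state, lying on $X^{l}$, we get $\varphi_m(\alpha_l)=\delta_{lm}$ and $L(\alpha_l)=0$ for $l,m\le k-2$; and $\tau=\tau_{1,2^{k-1}}$ has active states only on $X^{k-1}$, exactly one of them ($v_{k-1,1}$) in the left half, so $\varphi_m(\tau)=0$ for $m\le k-2$ and $L(\tau)=1$. Hence $\Phi(S_{\alpha})$ is a basis of $C_2^{\,k}$, so $\Phi$ is onto and $[G_k:\ker\Phi]=2^{k}$. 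It remains to identify $\ker\Phi$ with the Frattini subgroup $G_k^{2}G_k'$. Since the target is abelian of exponent two, $G_k^{2}G_k'\subseteq\ker\Phi$ automatically. For the reverse, an element $g\in\ker\Phi$ has an even number of active states on each $X^{l}$ with $l<k-1$ and an even number in the left half of $X^{k-1}$, i.e. its $W_{k-1}$‑component is not of type \texttt{T}; by Statement \ref{comm} every such $g$ lies in $G_k^{2}G_k'$. Therefore $\ker\Phi=G_k^{2}G_k'$ and ${}^{G_k}/{}_{G_k^{2}G_k'}\cong C_2^{\,k}$. This is also consistent with the Burnside basis theorem applied to the finite $2$‑group $G_k$: it was shown above that $G_k$ has no generating set of rank $k-1$ while $S_{\alpha}$ has $k$ elements, so $d(G_k)=k$ and indeed $|G_k:G_k^{2}G_k'|=2^{k}$.

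The step I expect to be the main obstacle is the inclusion $\ker\Phi\subseteq G_k^{2}G_k'$: it relies on Statement \ref{comm} genuinely characterising $G_k^{2}G_k'$ as the full set of automorphisms of $X^{[k]}$ lying in $G_k$ that have even index on $X^0,\dots,X^{k-2}$ and are non‑\texttt{T} on $X^{k-1}$ (an automorphism being determined by its states, this amounts to $G_k^{2}G_k'$ realising every such state tuple, which is what the construction in the proof of Statement \ref{comm} produces). One must also be sure there is no hidden linear dependence among $\varphi_0,\dots,\varphi_{k-2},L$, which is exactly what the surjectivity computation on $S_{\alpha}$ rules out; the verification that $L$ is a homomorphism (the possible swap of the two halves of $X^{k-1}$ by states from higher levels) is routine once one observes $L(g)=R(g)$ on $G_k$.
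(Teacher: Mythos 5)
Your proposal is correct and follows essentially the same route as the paper: the paper likewise assembles the level-parity homomorphisms $\varphi_0,\dots,\varphi_{k-2}$ together with a parity map $\phi_{k-1}$ built from the two halves $X_1,X_2$ of the last level (which, since the two half-parities agree on $G_k$, is exactly your $L$), and identifies the kernel with $G_k^2G_k'$ via Statement \ref{comm}. Your write-up is simply a tidier version of the same argument, with the homomorphism property of $L$, the surjectivity check on $S_\alpha$, and the rank-$k$ (Burnside basis) cross-check made explicit.
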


\begin{proof}
The proof is based on two facts $G_{k}^{2}{{G}^{'}_{k}}\simeq G_{k}^{2}\triangleleft {{G}_{k}}$ and $\left| G:G_{k}^{2}{{G}^{'}_{k}} \right|=2^k$.
Construct a homomorphism from $G_k(l)$ to ${{C}_{2}}$ in the following way: $\varphi (\alpha )=\sum\limits_{i=1}^{{{2}^{l}}}{{{s}_{li}}(\alpha )}\bmod 2$. Note that $\varphi (\alpha \cdot \beta )=\varphi (\alpha )\circ \varphi (\beta )=(\sum\limits_{i=1}^{{{2}^{l}}}{{{s}_{li}}(\alpha )}+\sum\limits_{i=1}^{{{2}^{l}}}{{{s}_{li}}(\beta )})mod2$, where $\alpha ,\,\,\beta \in Aut{{X}^{[n]}}$.
Index of $\alpha \in G_{k}^{2}$ on ${{X}^{l}},\,l<k-1$ is even but index of  $\beta \in {{G}_{k}}$ on ${{X}^{l}}$ can be both even and odd. Note that ${{G}_{k}}(l)$ is abelian group and $G_{k}^{2}(l)\trianglelefteq {{G}_{k}}$.
  Since words with equal logarithms to all bases \cite{K} belong to distinct cosets of the commutator, the subgroup $G_{k}^{2}(l)$ is the kernel of this mapping. Also we can use homomorphism $\varphi $  which is described above and denote it as ${{\varphi }_{l}}$, to map ${{G}_{k}}(l)$ to ${{C}_{2}}$ the $\ker {{\varphi }_{l}}=G_{k}^{2}(l)$. Really if $\alpha $ from ${{G}_{k}}(l)$ has odd number of active states on ${{X}^{l}},\,\,\,l<k-1$ than  ${{\varphi }_{i}}(\alpha )=1$ in ${{C}_{2}}$ otherwise if this number is even than $\alpha $ from $\ker {{\varphi }_{i}}$ so ${{\varphi }_{i}}(\alpha )=0$ hence $\ker {{\varphi }_{i}}=G_{k}^{2}(l)$.
   So ${}^{{{G}_{k}}(l)}/{}_{G_{k}^{2}(l)}={{C}_{2}}$ analogously ${}^{{{B}_{k}}(l)}/{}_{B_{k}^{2}(l)}={{C}_{2}}$. Let us  check that mapping  $({{\varphi }_{0}},{{\varphi }_{1}},...,{{\varphi }_{k-2}},{{\phi }_{k-1}})$ is the homomorphism from ${{G}_{k}}$ to $\underbrace{{{C}_{2}}\times {{C}_{2}}\times ...\times {{C}_{2}}}_{k-1}$.
So we can construct and homomorphism ${{\varphi }_{i}}$ from every factor ${}^{{{G}_{k}}(i)}/{}_{G_{k}^{2}(i)}$ of this direct product to ${{C}_{2}}$.    The group ${}^{{{G}_{k}}}/{}_{G_{k}^{2}}$  is elementary abelian 2-group because ${{g}^{2}}=e,\,\,g\in G$.
Index subgroup $\varphi (\alpha \cdot \beta )=\left( \varphi (\alpha )+\varphi (\beta ) \right)\bmod 2$ because multiplilcation $\alpha \cdot \beta $ in ${{G}_{k}}$ does not change a parity of  index of $\beta $, $\beta \in {{G}_{k}}$ on ${{X}^{l}}$.  Really action of element of active group $A=\underbrace{{{S}_{2}}\wr {{S}_{2}}\wr ...\wr {{S}_{2}}}_{l-1}$  from wreath power $(\underbrace{{{S}_{2}}\wr {{S}_{2}}\wr ...\wr {{S}_{2}}}_{l-1})\wr {{S}_{2}}$ on element from passive subgroup ${{S}_{2}}$ of second multiplier from product $gf,\,\,g,f\in (\underbrace{{{S}_{2}}\wr {{S}_{2}}\wr ...\wr {{S}_{2}}}_{l-1})\wr {{S}_{2}}$ does not change a parity of index of $\beta $ on ${{X}^{l}}$, if index of $\beta $ was even then under action  it stands to be even and the sum $\varphi (\alpha )\bmod 2+\varphi (\beta )\bmod 2$ will be equal to $(\varphi (\alpha )+\varphi (\beta ))\bmod 2$,  hence it does not change a $\varphi (\beta )$. Since words with equal logarithms to all bases \cite{K} belong to distinct cosets of the commutator, the subgroup $G_{k}^{2}(l)$ is the kernel of this mapping.
Let us define the permutations of the type 2 that act on ${{X}_{1}}$ and ${{X}_{2}}$, where ${{X}_{1}}=\{{{v}_{k,1}},...,{{v}_{k{{,2}^{k-1}}}}\},\,\,{{X}_{2}}=\{{{v}_{k{{,2}^{k-1}}+1}},...,{{v}_{k{{,2}^{k}}}}\},\,{{X}_{1}}\cup {{X}_{2}}={{X}^{k}}$  only by even permutations. Subgroup $G_{k}^{2}G{{'}_{k}}$ acts only by permutations of type 2 on ${{X}_{1}}$, ${{X}_{2}}$, according to Statement 2.
The restriction ${{\left. G_{k}^{2} \right|}_{{{X}^{[k-1]}}}}$ acts only by permutations of the second type (elements of it form a normal subgroup in ${{G}_{k}}$) by parity of permutation on sets  ${{X}_{1}}$ and ${{X}_{2}}$.  A permutation of Type 1, where on ${{X}_{1}}$ and ${{X}_{2}}$  the group ${{G}_{k}}$  can acts by odd as well as by even permutations but in such way to resulting permutation on ${{X}^{k}}$ is always even. The number of active states from subgroup ${{G}_{k}}(k-1)$ on ${{X}^{k-1}}$ can be even as well as odd.
It means that on set of vertices of ${{X}^{k-1}}$  over ${{X}_{1}}$ i.e. vertices which are connected by edges with vertices of ${{X}^{k-1}}$  over ${{X}_{1}}$ automorphism of ${{G}_{k}}$ can contains odd number of active states (and ${{X}_{2}}$ analogously).
For subgroup ${{G}_{k}}(k-1)$  it was constructed a homomorphism onto ${{C}_{2}}$ as a sum of non-trivial states by $\,\bmod \,2$  on both sets ${{X}_{1}}$ and ${{X}_{2}}$.

 For subgroup $G_k(k-1)$  we construct a homomorphism onto ${C}_{2}$ as a sum of non-trivial v.p. by $\bmod 2$ on both sets ${X}_{1}$ and ${X}_{2}$.
For a subgroup $G(k-1)$ such that has the normal subgroup ${{G}_{k}}^2({k-1})\triangleleft {{G}_{k}}({k-1})$ we construct a homomorphism: $ {{\phi }_{k-1}} \left( {{G}_{k}}({{X}_{k}}) \right)\to {{C}_{2}} \simeq {}^{{G}_{k}}({k-1})/{}_{ G^2_k (k-1)} $ as product of sum by $mod2$ of active states (${{s}_{k-1,i}}\in \{0,1\}$, $0<j\le {{2}^{k-2}}$ if ${{s}_{ij}}\in {{X}_{1}}$)
   on each set ${{X}_{1}}$ and ${{X}_{2}}$:  $\phi_{\alpha} ({{X}_{1}})\,\, \cdot \,\,\phi_{\alpha} ({{X}_{2}})=\sum\limits_{i=1}^{{{2}^{k-2}}}{{{s}_{k-1,i}}(\alpha)}(\bmod 2) \cdot \sum\limits_{i={{2}^{k-2}}+1}^{{{2}^{k-1}}}{{{s}_{k-1,i}}(\alpha)} (\bmod 2)$.
  Where ${{s}_{k-1,i}}(\alpha)=1$ if  there is active state  in ${{v}_{k-1,i}}, \, i<2^{k-1}+1$ and ${{s}_{k-1,i}}(\alpha)=0$ if there is no active state. It follows from structure of ${{G}_{k}}$ that $\phi_{\alpha} ({{X}_{1}})\,\,=\,\,\phi_{\alpha} ({{X}_{2}})$ so it is 0 or 1. But $G_{k}^{2}{{G}^{'}_{k}}$ admits only permutations of Type 2 on ${{X}^{k}}$ so $G_{k}^{2}{{G}_{k}}'({{X}_{k}})\triangleleft {{G}_{k}}({{X}_{k}})$ because it holds a conjugacy and it is kernel of map from ${{G}_{k}}({{X}_{k}})$ onto ${{C}_{2}}$.

Hence for a subgroup $G(k-1)$ such that has the normal subgroup ${{G}_{k}}^{2}(k-1)\triangleleft {{G}_{k}}(k-1)$ it was constructed a homomorphism:  ${{\phi }_{k-1}}\left( {{G}_{k}}(k-1) \right)\to {{C}_{2}}\simeq {{G}_{k}}(k-1){{/}_{G_{k}^{2}(k-1)}}$ as product of sum by $mod2$ of active states from $X_1$ and $X_2$.
 As the result we have ${}^{{{G}_{k}}}/{}_{G_{k}^{2}}\simeq \underbrace{{{C}_{2}}\times {{C}_{2}}\times ...\times {{C}_{2}}}_{k-1}$.
\end{proof}
Considering that it was proved in Theorem 1 and Theorem 2 that ${{G}_{k}}\simeq A_{2^k}$ we can formulate next Corollary.
   \begin{corollary} The group $A_{2^k}$ has minimal system of generators from $k$ elements.
\end{corollary}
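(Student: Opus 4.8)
The plan is to combine the two bounds that have already been prepared. The upper bound is immediate from Theorem \ref{isomor}: the set $S_{\alpha}=\{\alpha_0,\dots,\alpha_{k-2},\tau\}$ generates $G_k\simeq Syl_2(A_{2^k})$ and has cardinality $k$, so it remains only to show that no generating system of $G_k$ has fewer than $k$ elements, i.e. $\mathrm{rk}(G_k)\ge k$.

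For the lower bound the cleanest route uses the Frattini subgroup, since $G_k$ is a finite $2$-group. By the Burnside basis theorem $\mathrm{rk}(G_k)=\dim_{\mathbb{F}_2}\!\big(G_k/\Phi(G_k)\big)$, and by Statement \ref{comm} one has $\Phi(G_k)=G_k^{2}[G_k,G_k]=G_k^{2}$. Hence it suffices to invoke Corollary \ref{qoutient}, which gives $G_k/G_k^{2}[G_k,G_k]\simeq\underbrace{C_2\times\dots\times C_2}_{k}$, so the dimension is exactly $k$; together with the upper bound this yields $\mathrm{rk}(G_k)=k$, i.e. $S_{\alpha}$ is a minimal generating system and $k$ is the least possible number of generators.

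A second, independent justification avoids the Frattini computation and proceeds through the explicit generating-set analysis. Any generating system $S$ of $G_k$ restricts on $X^{[k-1]}$ to a generating system of $G_k|_{X^{[k-1]}}\simeq B_{k-1}\simeq \mathrm{Aut}\,X^{[k-1]}$, which by Corollary \ref{B_k-1} and Lemma \ref{rk} requires at least $k-1$ elements; and, by Corollary \ref{generating pair} together with Lemma \ref{Lem About product elem of C} and Theorem \ref{Th about general relation}, producing an element of type \texttt{T} from such a system forces $S$ to contain a pair $\beta_{i_m;\tau},\ \beta_{i_m}$, i.e. one generator in addition to the $k-1$ already needed for $B_{k-1}$, so $|S|\ge k$. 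Either way the bound is tight.

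The genuine obstacle is the lower bound, and concretely it is the claim $\big[G_k:G_k^{2}[G_k,G_k]\big]=2^{k}$ rather than $2^{k-1}$ — equivalently, that the ``type-\texttt{T} direction'' is not absorbed by squares and commutators. This is exactly what Lemma \ref{About not closed set of element of type T} and Statement \ref{comm} secure: every square of a generator and every commutator has even index on $X^{k-1}$ in each of the halves $X_1$ and $X_2$, so the homomorphism $\phi_{k-1}$ detecting an element of type \texttt{T} is non-trivial on $G_k$ but kills $G_k^{2}[G_k,G_k]$, contributing the $k$-th $C_2$ factor on top of the $k-1$ factors coming from the levels $X^0,\dots,X^{k-2}$. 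Checking that $(\varphi_0,\dots,\varphi_{k-2},\phi_{k-1})$ is a homomorphism onto $C_2^{\,k}$ with kernel precisely $\Phi(G_k)$ is the only technical point, and it has already been carried out in the proof of Corollary \ref{qoutient}.
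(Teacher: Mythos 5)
Your main argument is exactly the paper's own proof: invoke Corollary \ref{qoutient} to get $G_k/G_k^2[G_k,G_k]\simeq C_2^{\,k}$ and apply the Burnside basis theorem, with the $k$-element set from Theorem \ref{isomor} giving the matching upper bound. The alternative route you sketch via Lemma \ref{Lem About product elem of C}, Theorem \ref{Th about general relation} and Lemma \ref{rk} is also the paper's other stated path (used for the Main Theorem), so the proposal is correct and follows essentially the same approach.
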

   \begin{proof}
Since quotient group of ${{G}_{k}}$ by subgroup of Frattini $G_{k}^{2}{{G}^{'}_{k}}$  has minimal system of generators from $k$ elements because ${}^{{{G}_{k}}}/{}_{G_{k}^{2}{{G}^{'}_{k}}}$  is isomorphic to linear $p$-space $(p=2)$ of dimension $k$ (or elementary abelian group) then ${{G}_{k}}$ has rank $k$  \cite{Rot}.
   \end{proof}
 \begin{main_theorem}
The set $S_{\mathop{\beta}}=\{\mathop{\beta}_{0}, \mathop{\beta}_{1}, \mathop{\beta}_{2}, \ldots , \mathop{\beta}_{k-2}, \tau \}$, where $\mathop{\beta}_{i} = \alpha_i$, 
 is a minimal generating system for a group $G_k$ that is isomorphic to Sylow 2-subgroup of $A_{2^{k}}$.
\end{main_theorem}
We have isomorphism of $G_k$ and $Syl_2 (A_{2^k})$ from Theorem \ref{isomor}, the minimality of $S_{\mathop{\beta}}$ following from
Lemma \ref{Lem About product elem of C} which said that $S_{\mathop{\beta}}$ has to contain an element of type \texttt{T}, Theorem 3 and Lemma \ref{rk} about minimal rank.
Another way to prove the minimality of $S_{\mathop{\beta}}$ is given to us by Corollary \ref{qoutient} about quotient by Frattini subgroup.

For example a minimal system of generators for $Syl_2(A_{8})$ can be constructed by following way, for convenience let us consider the next system:

\shorthandoff{"}
\def\vertex{\scriptscriptstyle\cdot}
\def\objectstyle{\scriptstyle}
\xy <1cm,0cm>:
(10,3.5)="C0"*{\ }+<-1.2ex,-0.45ex>*{ \beta_0  },
(11,3)="C1"*{\vertex}+<-1.2ex,-0.45ex>*{},
(10.5,2)="C5"*{\vertex}+<-1.2ex,-0.45ex>*{},
(11.5,2)="C6"*{\vertex}+<-1.2ex,-0.45ex>*{},
(10.33,1)="C7"*{\vertex}+<-1.2ex,-0.45ex>*{}, 
(10.66,1)="C8"*{\vertex}+<-1.2ex,-0.45ex>*{},
(11.33,1)="C9"*{\vertex}+<-1.2ex,-0.45ex>*{}, 
(11.66,1)="C10"*{\vertex}+<-1.2ex,-0.45ex>*{},
(13,3)="C2"*{\vertex}+<1.2ex,-0.45ex>*{},
(12.5,2)="C11"*{\vertex}+<-1.2ex,-0.45ex>*{},
(13.5,2)="C12"*{\vertex}+<-1.2ex,-0.45ex>*{},
(12.33,1)="C13"*{\vertex}+<-1.2ex,-0.45ex>*{},
(12.66,1)="C14"*{\vertex}+<-1.2ex,-0.45ex>*{},
(13.33,1)="C15"*{\vertex}+<-1.2ex,-0.45ex>*{},  
(13.66,1)="C16"*{\vertex}+<-1.2ex,-0.45ex>*{},  
(12,4)="L0"*{\bullet}+<0ex,1.5ex>*{1},
"C2";"L0"**@{-},
"C1";"C5"**@{-},
"C1";"C6"**@{-},
"C5";"C7"**@{--},  
"C5";"C8"**@{--},
"L0";"C1"**@{-},
"C6";"C9"**@{--},
"C6";"C10"**@{--},
"C2";"C11"**@{-},
"C2";"C12"**@{-},
"C11";"C13"**@{--},
"C11";"C14"**@{--},
"C12";"C15"**@{--},
"C12";"C16"**@{--},
(0,3.5)="A0"*{\ }+<-1.2ex,-0.45ex>*{ \tau},
(1,3)="A1"*{\vertex}+<-1.2ex,-0.45ex>*{},
(0.5,2)="A5"*{\bullet}+<-1.2ex,-0.45ex>*{1},
(1.5,2)="A6"*{\vertex}+<-1.2ex,-0.45ex>*{},
(0.33,1)="A7"*{\vertex}+<-1.2ex,-0.45ex>*{},
(0.66,1)="A8"*{\vertex}+<-1.2ex,-0.45ex>*{},
(1.33,1)="A9"*{\vertex}+<-1.2ex,-0.45ex>*{},
(1.66,1)="A10"*{\vertex}+<-1.2ex,-0.45ex>*{},
(3,3)="A2"*{\vertex}+<1.2ex,-0.45ex>*{},
(2.5,2)="A11"*{\vertex}+<-1.2ex,-0.45ex>*{},
(3.5,2)="A12"*{\bullet}+<-1.2ex,-0.45ex>*{1},
(2.33,1)="A13"*{\vertex}+<-1.2ex,-0.45ex>*{},
(2.66,1)="A14"*{\vertex}+<-1.2ex,-0.45ex>*{},
(3.33,1)="A15"*{\vertex}+<-1.2ex,-0.45ex>*{},
(3.66,1)="A16"*{\vertex}+<-1.2ex,-0.45ex>*{},
(2,4)="A4"*{\vertex}+<0ex,1.5ex>*{},
"A2";"A4"**@{-},
"A1";"A5"**@{-},
"A1";"A6"**@{-},
"A5";"A7"**@{--},
"A5";"A8"**@{--},
"A4";"A1"**@{-},
"A6";"A9"**@{--},
"A6";"A10"**@{--},
"A2";"A11"**@{-},
"A2";"A12"**@{-},
"A11";"A13"**@{--},
"A11";"A14"**@{--},
"A12";"A15"**@{--},
"A12";"A16"**@{--},
(5,3.5)="B0"*{\ }+<-1.2ex,-0.45ex>*{ \beta_1 },
(6,3)="B1"*{\bullet}+<-1.2ex,-0.45ex>*{1}, 
(5.5,2)="B5"*{\vertex}+<-1.2ex,-0.45ex>*{},
(6.5,2)="B6"*{\vertex}+<-1.2ex,-0.45ex>*{},
(5.33,1)="B7"*{\vertex}+<-1.2ex,-0.45ex>*{},
(5.66,1)="B8"*{\vertex}+<-1.2ex,-0.45ex>*{},
(6.33,1)="B9"*{\vertex}+<-1.2ex,-0.45ex>*{},
(6.66,1)="B10"*{\vertex}+<-1.2ex,-0.45ex>*{},
(8,3)="B2"*{\vertex}+<1.2ex,-0.45ex>*{},
(7.5,2)="B11"*{\vertex}+<-1.2ex,-0.45ex>*{},
(8.5,2)="B12"*{\vertex}+<-1.2ex,-0.45ex>*{}, 
(7.33,1)="B13"*{\vertex}+<-1.2ex,-0.45ex>*{},
(7.66,1)="B14"*{\vertex}+<-1.2ex,-0.45ex>*{},
(8.33,1)="B15"*{\vertex}+<-1.2ex,-0.45ex>*{},
(8.66,1)="B16"*{\vertex}+<-1.2ex,-0.45ex>*{},
(7,4)="B4"*{\vertex}+<0ex,1.5ex>*{},
"B2";"B4"**@{-},
"B1";"B5"**@{-},
"B1";"B6"**@{-},
"B5";"B7"**@{--},
"B5";"B8"**@{--},
"B4";"B1"**@{-},
"B6";"B9"**@{--},
"B6";"B10"**@{--},
"B2";"B11"**@{-},
"B2";"B12"**@{-},
"B11";"B13"**@{--},
"B11";"B14"**@{--},
"B12";"B15"**@{--},
"B12";"B16"**@{--},
\endxy

 Consequently, in such way we construct second generating system for $A_{2^k}$ of $k$ elements that is less than in \cite{Iv}, and this system is minimal.

\section{Structure and propeties of $Syl_2{A_n}$}

The structure of $Syl_2A_{12}$ is the same as of the subgroup $H_{12} < Syl_2(S_8) \times Syl_2(S_4)$, for which $[Syl_2(S_8) \times Syl_2(S_4):H_{12}]=2$. $|Syl_2(A_{12})|= 2^{[12/2] + [12/4]+ [12/8]-1} = 2^9$. Also $|Syl_2(S_8)|=2^7$, $|Syl_2(S_4)|=2^3$, so $|Syl_2(S_8) \times Syl_2(S_4)|=2^{10}$ and $|H_{12}|=2^9$, because its index in $Syl_2(S_8) \times Syl_2(S_4)$ is 2. The structure of $Syl_2(A_6)$ is the same as of $H_6 < Syl_2(S_4) \times (C_2)$. Here $H_6 = \{(g,h_g)|g \in Syl_2(S_4), h_g \in C_2\}$, where
\begin{equation}\label{H}
 \begin{cases}
h_g = e, \ \ if \ g|_{L_2} \in Syl_2(A_6), \\
h_g = (5,6), \ if \, g|_{L_2} \in {Syl_2(S_6) \setminus Syl_2(A_6)},
 \end{cases}
\end{equation}
The structure of $Syl_2(A_{6})$ is the same as subgroup $H_6:$ $H_6 < Syl_2(S_4) \times (C_2)$ where $H_6= \{ (g, h) | g\in Syl_2(S_4), h \in  AutX \}$. So last bijection determined by (\ref{H}) giving us $Syl_2 A_{6} \simeq Syl_2 S_{4} $. As a corollary we have $Syl_2 A_{{2^k}+2} \simeq Syl_2 S_{2^k} $.
The structure of $Syl_2(A_{7})$ is the same as of the subgroup $H_7:$ $H_7 < Syl_2(S_4) \times S_2$ where $H_6= \{ (g, h) | g\in Syl_2(S_4), h \in  S_2 \}$ and $h$ depends of $g$:
\begin{equation}\label{HH}
 \begin{cases}
h_g = e, \ \ if \  g|_{L_2}\in  Syl_2  A_7, \\
h_g = (i,j), i,j \in\{ 5,6,7 \},  \ if \, g|_{L_2}\in  {Syl_2 S_7\setminus Syl_2A_7}.
 \end{cases}
\end{equation}
The generators of the group $H_7$ have the form $(g,h), \, \, g\in Syl_2(S_4), \, h\in C_2$, namely: $ \{ {\beta_{0}; \beta_{1}, \tau} \} \cup \{ (5,6) \}$. An element $h_g$ can't be a product of two transpositions of the set: ${(i,j), (j,k), (i,k)}$, where $i,j,k$  $\in\{ 5,6,7 \} $, because $(i,j)(j,k)=(i,k,j)$ but $ord(i,k,j) =3$, so such element doesn't belong to 2-subgroup. In general elements of $Syl_2 A_{4k+3}$ have the structure (\ref{HH}), where $h_g = (i,j), \,\, i,j \in\{ 4k+1, 4k+2, 4k+3 \}$ and $g\in Syl_2 S_{4k}$.

Also $|Syl_2(S_8)|=2^7$, $|Syl_2(S_4)|=2^3$, so $|Syl_2(S_8) \times Syl_2(S_4)|=2^{10}$ and $|H_{12}|=2^9$, because its index in $Syl_2(S_8) \times Syl_2(S_4)$ is 2. The structure of $Syl_2(A_6)$ is the same as of $H_6 < Syl_2(S_4) \times (C_2)$. Here $H_6 = \{(g,h_g)|g \in Syl_2(S_4), h_g \in C_2\}$.

The orders of this groups are equal, really $|Syl_2(A_7)|= 2^{[7/2] + [7/4]-1}  = 2^3= |H_7|$. In case \, $g|_{L_2}\in  {S_7\setminus A_7}$ we have $C_3^2$ ways to construct one transposition that is direct factor in $H$ which complete $Syl_2 S_4$ to $H_7$ by one transposition  $: \{(5,6); (6,7); (5,7) \}$.

The structure of $Syl_2(A_{2^k+2^l})$ $(k>l)$ is the same as of the subgroup $H_{2^k+2^l} < Syl_2(S_{2^k}) \times Syl_2(S_{2^l})$, for which $[Syl_2(S_{2^k}) \times Syl_2(S_{2^l}):H]=2$. $|Syl_2(A_{2^k+2^l})|= 2^{[(2^k+2^l)!/2] + [(2^k+2^l)!/4]+ .... -1} $.
Here $H = \{(g,h_g)|g \in Syl_2(S_2^k), h_g \in Syl_2(S_2^l\}$, where

\begin{equation}\label{HHH}
 \begin{cases}
h  \in    A_{2^l}, \ \ if \  g|_{X^{k-1}}\in    A_{2^k}, \\
h:  h|_{X^2}\in  {Syl_2 (S_{2^l}) \setminus Syl_2 (A_{2^l})},  \ if \, g|_{X^k}\in  {Syl_2 (S_{2^k}) \setminus Syl_2 (A_{2^k})}.
 \end{cases}
\end{equation}
The generators of the group $H_7$ have the form $(g,h), \, \, g\in Syl_2(S_4), \, h\in C_2$, namely: ${\beta_{0}; \beta_{1}, \tau} \cup {(5,6)}$.

I.e. for element  ${{\beta }_{\sigma }}(2i-1)=2\sigma (i)-1,\,\,{{\beta }_{\sigma }}(2i)=2\sigma (i)$, ${{\sigma }_{i}}\in \left\{ {{1,2,...,2}^{k-1}} \right\}$.

\begin{property} Relation between structures of Sylow subgroups is given by $ Sy{{l}_{2}}({{A}_{4k+2}}) \simeq Sy{{l}_{2}}({S_{4k}})$, where $k\in \mathbb{N}$.
\end{property}
\begin{proof}
Here $H_6 = \{(g,h_g)|g \in Syl_2(S_4), h_g \in C_2\}$, where

\begin{equation}\label{HHHH}
 \begin{cases}
h_g = e, \ \ if \ g|_{L_k} \in Syl_2(A_{4k+2}), \\
h_g = (4k+1,4k+2), \ if \, g|_{L_k} \in Syl_2(S_{4k+2}) \setminus Syl_2(A_{4k+2}),
 \end{cases}
\end{equation}

The structure of $Syl_2(A_{6})$ is the same as subgroup $H_6:$ $H_6 < Syl_2(S_4) \times (C_2)$ where $H_6= \{ (g, h) | g\in Syl_2(S_4), h \in  AutX \}$. So last bijection determined by (\ref{HHHH}) give us $Syl_2 A_{6} \simeq Syl_2 S_{4} $. As a corollary we have $Syl_2 A_{{2^k}+2} \simeq Syl_2 S_{2^k} $.
\end{proof}
\begin{property} Relation between orders of Sylows subgroup for $n=4k-2$ and $n=4k$ is given by $\left| Sy{{l}_{2}}({{A}_{4k-2}}) \right|={{2}^{i}}\left| Sy{{l}_{2}}({{A}_{4k}}) \right|$, where value $i$ depend only of power of 2 in decomposition of prime number of $k$.
\end{property}
\begin{proof}
Really $\left| {{A}_{4k-2}} \right|=\frac{(4k-2)!}{2}$, therefore $\left| {{A}_{4k}} \right|=\frac{(4k-2)!}{2}(4k-1)4k$, it means that $i$ determines only by $k$ and isn't bounded.
 \end{proof}
\begin{lemma} \label{islmorph} If $n=4k+2$, then the group $Syl_2(A_n)$ is isomorphic
to $Syl_2(S_{4k})$.
\end{lemma}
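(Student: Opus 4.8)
The plan is to combine the structure theorem for $Syl_2(S_m)$ (Statement~1) with the index-$2$ description of $Syl_2(A_m)\lhd Syl_2(S_m)$ and then exhibit the required isomorphism as the graph of a sign homomorphism. First I would record the relevant binary arithmetic: since $4k\equiv 0\pmod 4$, the $2$-adic expansion of $n=4k+2$ is that of $4k$ with the bit in position $1$ switched on, i.e.\ $n=2+2^{k_1}+\dots+2^{k_m}$ and $4k=2^{k_1}+\dots+2^{k_m}$ with $k_1>\dots>k_m\ge 2$. By Statement~1 this yields $Syl_2(S_n)\simeq Syl_2(S_2)\times Syl_2(S_{2^{k_1}})\times\dots\times Syl_2(S_{2^{k_m}})\simeq C_2\times Syl_2(S_{4k})$, realized concretely with $Syl_2(S_{4k})$ acting on $\{1,\dots,4k\}$ and the $C_2$-factor equal to $\langle(4k+1,4k+2)\rangle$. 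A one-line Legendre computation, $v_2((4k+2)!)=v_2((4k)!)+v_2(4k+1)+v_2(4k+2)=v_2((4k)!)+1$, then gives $|Syl_2(A_n)|=2^{v_2((4k+2)!)-1}=2^{v_2((4k)!)}=|Syl_2(S_{4k})|$, so the two groups have the same order.

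Next I would pin down $Syl_2(A_n)$ inside $C_2\times Syl_2(S_{4k})$. As $A_n=\ker\operatorname{sgn}$, a pair $(g,h)$ with $g\in Syl_2(S_{4k})$ and $h\in\langle(4k+1,4k+2)\rangle$ lies in $A_n$ exactly when $\operatorname{sgn}(g)\operatorname{sgn}(h)=1$; hence $Syl_2(A_n)=\{(g,h_g):g\in Syl_2(S_{4k})\}$, where $h_g=e$ for $g$ even and $h_g=(4k+1,4k+2)$ for $g$ odd. Multiplicativity of the sign shows that $\varepsilon\colon g\mapsto h_g$ is a homomorphism $Syl_2(S_{4k})\to C_2$, so this set is precisely the graph of $\varepsilon$, hence a subgroup. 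Moreover $\varepsilon$ is onto: each wreath factor $Syl_2(S_{2^{k_i}})\simeq C_2\wr\dots\wr C_2$ contains a single transposition at its bottom level and thus an odd permutation, so $Syl_2(S_{4k})$ does too.

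Finally, the coordinate projection $\phi\colon Syl_2(A_n)\to Syl_2(S_{4k})$, $(g,h_g)\mapsto g$, is a homomorphism; it is injective because $h_g$ is determined by $g$, and it is surjective because every $g$ is hit by $(g,h_g)$. Thus $\phi$ is an isomorphism. Since the graph subgroup is a $2$-group inside $A_n$ of order $2^{v_2((4k+2)!)-1}$, i.e.\ of maximal $2$-power order, it genuinely is a Sylow $2$-subgroup of $A_n$, which justifies the notation. The one point needing care — the main obstacle — is verifying that this graph really is all of $Syl_2(A_n)$: this needs surjectivity of $\varepsilon$ (so that the graph projects onto the full $Syl_2(S_{4k})$ and attains order $2^{v_2((4k)!)}$) together with the order count above, which rules out any larger $2$-subgroup of $A_n$ meeting $C_2\times Syl_2(S_{4k})$. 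I would also remark that this Lemma merely restates the Property proved above, so one may alternatively just cite the bijection there.
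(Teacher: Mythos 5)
Your proposal is correct and follows essentially the same route as the paper: the paper also realizes $Syl_2(A_{4k+2})$ inside $Syl_2(S_{4k})\times\langle(4k+1,4k+2)\rangle$ via the map $\sigma\mapsto\sigma\cdot(4k+1,4k+2)^{\chi(\sigma)}$, i.e.\ the graph of the sign homomorphism, and checks multiplicativity by the same parity case analysis. Your write-up is merely more explicit about the Legendre order count, the surjectivity of the sign map, and why the graph exhausts all of $Syl_2(A_n)$ — details the paper leaves implicit — but it is not a different argument.
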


\begin {proof} Bijection correspondence between set of elements of $Syl_2(A_n)$ and $Syl_2(S_{4k})$ we have from (\ref{HHHH}). Let's consider a mapping $\phi: Syl_2 (S_{4k}) \rightarrow Syl_2 (A_{4k+2})$ if $\sigma \in Syl_2(S_{4k})$ then $\phi(\sigma)=\sigma \circ (4k+1, 4k+2)^{\chi(\sigma)}=(\sigma,  (4k+1, 4k+2)^{\chi(\sigma)})$, where $\chi(\sigma)$ is number of transposition in $\sigma$ by module 2.

So $\phi(\sigma) \in Syl_2(A_{4k+2})$.
 If $\phi(\sigma) \in A_{n}$ then ${\chi(\sigma)}=0$, so $\phi(\sigma) \in Syl_2(A_{n-1})$. Check that $\phi$ is homomorphism.
Assume that ${{\sigma }_{1}}\in Sy{{l}_{2}}({{S}_{4k}}\backslash {{A}_{4k}}),\,\,{{\sigma }_{2}}\in Sy{{l}_{2}}({{A}_{4k}})$, then $\phi ({{\sigma }_{1}})\phi ({{\sigma }_{2}})=({{\sigma }_{1}},{{h}^{\chi({{\sigma }_{1}})}})({{\sigma }_{2}},e)=({{\sigma }_{1}}{{\sigma }_{2}},h)={{\sigma }_{1}}{{\sigma }_{2}}\circ (4k+1,4k+2)$, where $({{\sigma }_{i}},h)={{\sigma }_{i}}\circ {{h}^{\chi({{\sigma }_{i}})}}\in Sy{{l}_{2}}({{A}_{4k+2}})$. If ${\sigma _{1}},\,\,{\sigma_{2}}\in {{S}_{{{2}^{k}}}}\backslash {{A}_{{{2}^{k}}}}$, then $\phi ({{\sigma }_{1}})\phi ({{\sigma }_{2}})=({{\sigma }_{1}},{{h}^{\chi({{\sigma }_{1}})}})({{\sigma }_{2}},{{h}^{\chi({{\sigma }_{2}})}})=({{\sigma }_{1}}{{\sigma }_{2}},\,e)=(a,\,e)$, where ${{\sigma }_{1}}{{\sigma }_{2}}=a\in {{A}_{4k+2}}$.
So it is isomorphism.
\end {proof}

\begin{remark}
If $n=4k$, then index $Syl_2(A_{n+3})$ in $A_{n+3}$ is equal to $[S_{4k+1}: Syl_2 (A_{4k+1})](2k+1)(4k+3)$, index
$Syl_2(A_{n+1})$ in $A_{n+1}$ as a subgroup of index $2^{m-1}$, where $m$ is the
maximal natural number, for which $4k!$ is divisible by $2^m$.
\end {remark}
 \begin {proof} For $Syl_2(A_{n+3})$ its order equal to maximal power of 2 which divide $(4k+3)!$ this power on 1 grater then correspondent power in $(4k+1)!$  because $(4k+3)!=(4k+1)!(4k+2)(4k+3)=(4k+1)!2(2k+1)(4k+3)$ so $\mid Syl_2 A_{n+3}\mid= 2^m \cdot 2 = 2^{m+1}$.
As a result of it indexes of $A_{n+3}$ and $A_{n+1}$ are following: $  [S_{4k+1}: Syl_2 (A_{4k+1}) ] = \frac{(4k+1)!}{2^m} $ and $[S_{4k+3}: Syl_2 (A_{4k+3})] = [S_{4k+1}: Syl_2 (A_{4k+1}) ](2k+1)(4k+3) = \frac{(4k+1)!}{2^m}(2k+1)(4k+3) $.
\end {proof}

\begin{remark}
If $n=2k$ then
$[Syl_2(A_n) : Syl_2(S_{n-1})] =2^{m-1}$, where $m$ is the maximal power of 2 in factorization of $n$.
\end {remark}

 \begin {proof}
$|Syl_2(S_{n-1})|$ is equal to $t$ that is a maximal power of 2 in $(n-1)!$. $|Syl_2(A_{n})|$ is equal to maximal power of 2 in $(n!/2)$. Since $n=2k$ then $(n/2)!=(n-1)!\frac{n}{2}$ and $2^f$ is equal to product maximal power of 2 in $(n-1)!$ on maximal power of 2 in $\frac{n}{2}$. Therefore $\frac{|Syl_2(A_{n})|}{|Syl_2(S_{n-1})|}=\frac {2^{m-1}}{2^t} 2^t=2^{m-1}. $
Note that for odd $m=n-1$ the group $Syl_2(S_{m}) \simeq Syl_2(S_{m-1})$ i.e. $Syl_2(S_{n-1})\simeq Syl_2(S_{n-2})$. The group $Syl_2(S_{n-2})$ contains the automorphism of correspondent binary subtree with last level $X^{n-2}$ and this automorphism realizes the permutation $\sigma$ on $X^{n-2}$. For every $\sigma\in Syl_2(S_{n-2})$ let us set in correspondence a permutation $\sigma (n-1,n)^{\chi (\sigma)} \in Syl_2(A_{n})$, where $\chi (\sigma)$ -- number of transposition in $\sigma$ by $mod\, 2$, so it is bijection $\phi(\sigma)\longmapsto \sigma (n-1, n){\chi (\sigma)}$ which has property of homomorphism, see Lemma \ref{islmorph}. Thus we prove that $Syl_2(S_{n-1}) \hookrightarrow Syl_2(A_{n})$ and its index is $2^{d-1}$.
\end {proof}

\begin{remark}
The ratio of $|Syl_2(A_{4k+3})|$ and $|Syl_2(A_{4k+1})|$ is equal to 2 and ratio of indexes $[A_{4k+3} : Syl_2(A_{4k+3})]$ and $[A_{4k+1} : Syl_2(A_{4k+1})]$ is equal $(2k+1)(4k+3)$.
\end{remark}

\begin{proof} The
ratio $|Syl_2(A_{4k+3})| : |Syl_2(A_{4k+1})|= 2$ holds because formula of Legendere gives us new one power of 2 in $(4k+3)!$ in compering with $(4k+1)!$.  Second part of statement follows from theorem about $p$-subgroup of $H$, $[G:H] \neq kp $ then one of $p$-subgroups of $H$ is Sylow $p$-group of $G$. In this case $p=2$ but $|Syl_2(A_{4k+3})| : |Syl_2(A_{4k+1})|=2$ so we have to divide ratio of indexes on $2$.
\end{proof}


Let us consider function of Morse \cite{Shar} $f:\,{D^2}\to \mathbb{R}$ that painted at pict. 2 and graph of Kronrod-Reeb \cite{Maks} which obtained by contraction every set's component  of level of ${{f}^{-1}}(c)$ in point. Group of automorphism of this graph is isomorphic to $Sy{{l}_{2}}{{S}_{{{2}^{k}}}}$ where $k=2$ in general case we have regular binary rooted tree for arbitrary $k\in \mathbb{N}$.


\begin{figure}[h]
\begin{minipage}[h]{0.49\linewidth}
\center{\includegraphics[width=0.9\linewidth]{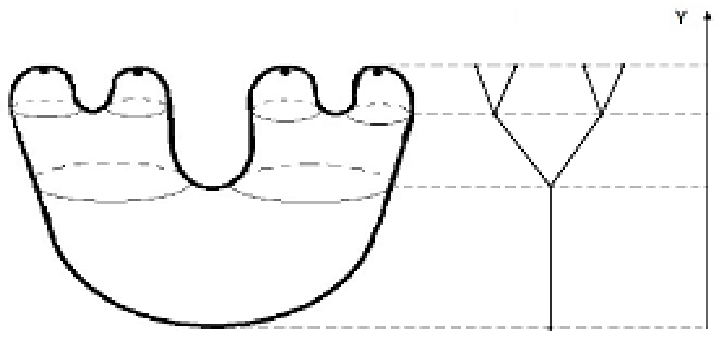} \\ Picture 2.}
\end{minipage}
\end{figure}
According to investigations of \cite{Maks2} for $D^2$ we have that $Syl_{2}S_{2^k} > G_k \simeq Syl_{2} A_{2^k}$ is quotient group of diffeomorphism group which stabilize function and isotopic to identity. Analogously to investigations of \cite{Maks, Maks2, SkThes} there is short exact sequence
$0\to {{\mathbb{Z}}^{m}}\to {{\pi }_{1}}{{O}_{f}}(f)\to G\to 0$, where $G$-group of automorphisms Reeb's (Kronrod-Reeb) graph \cite{Maks} and $O_f(f)$ is orbit under action of diffeomorphism group, so it could be way to transfer it for a group $Sy{{l}_{2}}(S_{2^k})$, where $m$ in ${{\mathbb{Z}}^{m}}$ is number of inner edges (vertices) in Reeb's graph, in case for $Syl_{2}S_{4}$ we have $m=3$.

Higher half of projection of manifold from pic. 2 can be determed by product of the quadratic forms $-({{(x+4)}^{2}}+{{y}^{2}})({{(x+3)}^{2}}+{{y}^{2}})({{(x-3)}^{2}}+{{y}^{2}})({{(x-4)}^{2}}+{{y}^{2}})=z$ in points $(-4,0) (-3,0) (3,0) (4,0) $ it reach a maximum value 0. Generally there is $-d_{1}^{2}d_{2}^{2}d_{3}^{2}d_{4}^{2}=z$.

Also from Statement \ref{comm} and corollary from it about $(AutX^{[k]})'$ can be deduced that derived length of $Syl_2 A_2^k$ is not always equal to $k$ as it was said in Lemma 3 of \cite{Dm} because in case $A_{2^k}$ if $k=2$ its $Syl_2 A_4 \simeq K_4$ but $K_4$ is abelian group so its derived length is 1.

\section{ Conclusion }
The proof of minimality of constructed generating set was done, also the description of the structure $Syl_2 A_{2^k}$, $Syl_2 A_{n}$ and its property was founded.

\end{document}